\newtheorem{theorem}{Theorem}[section]
\newtheorem{proposition}[theorem]{Proposition}
\newtheorem{corollary}[theorem]{Corollary}
\theoremstyle{definition}
\newtheorem{definition}[theorem]{Definition}
\newtheorem{remark}[theorem]{Remark}
\numberwithin{equation}{section}
\newcommand{\blankbox}[2]
\begin{document}
\title{On Schr\"{o}dinger-Virasoro type Lie conformal algebras}

\author{Yanyong Hong}
\address{College of Science, Zhejiang Agriculture and Forestry University,
Hangzhou, 311300, P.R.China}
\email{hongyanyong2008@yahoo.com}

\subjclass[2010]{17B40, 17B65, 17B68, 17B69}
\keywords{Lie conformal algebra, conformal derivation, conformal module, central extension, Gel'fand-Dorfman bialgebra}

\begin{abstract}
In this paper, two new classes of Schr\"{o}dinger-Virasoro type Lie conformal algebras $TSV(a,b)$ and $TSV(c)$ which are non-simple are introduced for some $a$, $b$, $c\in \mathbb{C}$. Moreover, central extensions, conformal derivations and  free conformal modules of rank 1 of $TSV(a,b)$ and $TSV(c)$ are determined.

\end{abstract}

\maketitle

%\footnote{The second author is the corresponding author.}%
\section{Introduction}
Lie conformal algebra introduced by V.Kac in \cite{K1}, \cite{K2} is an
axiomatic description of the operator product expansion (or rather
its Fourier transform) of chiral fields in conformal field theory.
It plays important roles in quantum field theory, vertex algebras and infinite-dimensional Lie algebras satisfying the locality property in \cite{K}. Moreover, Lie conformal algebras have close connections to Hamiltonian formalism in the theory of nonlinear evolution equations (see the book \cite{Do} and references therein, and also \cite{BDK, GD, Z, X4} and many other papers).

A Lie conformal algebra is said to be finite if it is finitely generated as a $\mathbb{C}[\partial]$-module. There are two important examples of finite Lie conformal algebras. One example is the Virasoro Lie conformal algebra $\text{Vir}$. It is defined by
$$\text{Vir}=\mathbb{C}[\partial]L, ~~[L_\lambda L]=(\partial+2\lambda)L.$$
It should be pointed out that a torsion-free Lie conformal algebra of rank 1 is either
trivial or isomorphic to $\text{Vir}$ (see \cite{DK1}).
The other example is the current Lie conformal algebra associated with a finite dimensional Lie algebra. Let $\mathfrak{g}$ be a finite dimensional Lie algebra. The current Lie conformal
algebra associated to $\mathfrak{g}$ is defined by:
$$\text{Cur} \mathfrak{g}=\mathbb{C}[\partial]\otimes \mathfrak{g}, ~~[a_\lambda b]=[a,b],
~~a,b\in \mathfrak{g}.$$
It is shown in \cite{DK1} that $\text{Vir}$ and all current Lie conformal algebras $\text{Cur} \mathfrak{g}$ where $\mathfrak{g}$ is a finite dimensional simple Lie algebra exhausts all finite simple Lie conformal algebras. Moreover, all irreducible finite conformal modules of finite simple Lie conformal algebras are determined in \cite{CK1}
and cohomology groups of finite simple Lie conformal algebras with some conformal modules are characterized in \cite{BKV}.

As for finite non-simple Lie conformal algebras, there are few examples and related results. Only recently, Su and Yuan in \cite{SY} investigate two new non-simple Lie conformal algebras which are obtained from Schr\"{o}dinger-Virasoro Lie algebra and the extended Schr\"{o}dinger-Virasoro Lie algebra. Similarly,  a Schr\"{o}dinger-Virasoro type Lie conformal algebra obtained from a twisted case of the deformative Schr\"{o}dinger-Virasoro Lie algebra are studied in \cite{WXX}. In fact, their methods are the same, i.e. to construct Lie conformal algebras from some known formal distribution Lie algebras (see \cite{K1}). In addition, a classification of the torsion-free Lie conformal algebras of rank 2 is presented in \cite{HL}. In this paper, we plan to present a generalization of Schr\"{o}dinger-Virasoro Lie conformal algebra in \cite{SY} and the Schr\"{o}dinger-Virasoro type Lie conformal algebra introduced in \cite{WXX} from the point of view of Lie conformal algebra. Through this generalization, we obtain two new classes of non-simple Lie conformal algebras of rank 3, i.e. $TSV(a,b)$ and $TSV(c)$.  Moreover, central extensions, conformal derivations and  free conformal modules of rank 1 of $TSV(a,b)$ and $TSV(c)$ are determined. These results will enrich the theory of finite non-simple Lie conformal algebra.

This paper is organized as follows. In Section 2, we introduce some definitions, notations and some results about Lie conformal algebras.
In Section 3, two new classes of Schr\"{o}dinger-Virasoro type Lie conformal algebras $TSV(a,b)$ and $TSV(c)$ are presented. In Section 4,
central extensions of $TSV(a,b)$ and $TSV(c)$ by a one-dimensional center are determined. In Section 5, we characterize conformal derivations of
$TSV(a,b)$ and $TSV(c)$. Moreover, free conformal modules of rank one of $TSV(a,b)$ and $TSV(c)$ are investigated.

Throughout this paper, denote by $\mathbb{C}$ the field of complex
numbers; $\mathbf{N}$ is the set of natural numbers, i.e.,
$\mathbf{N}=\{0, 1, 2,\cdots\}$; $\mathbb{Z}$ is the set of integer
numbers.

\section{Preliminaries}
In this section, we will recall some definitions, notations and some results about Lie conformal algebras. These facts can be referred to \cite{K1}.
\begin{definition}
A \emph{Lie conformal algebra} $R$ is a $\mathbb{C}[\partial]$-module with a $\lambda$-bracket $[\cdot_\lambda \cdot]$ which defines a $\mathbb{C}$-bilinear
map from $R\otimes R\rightarrow R[\lambda]$, where $R[\lambda]= R\otimes\mathbb{C}[\lambda]$ is the space of polynomials of $\lambda$ with coefficients
in $R$, satisfying
\begin{eqnarray*}
&&[\partial a_\lambda b]=-\lambda [a_\lambda b],~~~[a_\lambda \partial b]=(\lambda+\partial)[a_\lambda b], ~~\text{(conformal sesquilinearity)}\\
&&[a_\lambda b]=-[b_{-\lambda-\partial}a],~~~~\text{(skew-symmetry)}\\
&&[a_\lambda[b_\mu c]]=[[a_\lambda b]_{\lambda+\mu} c]+[b_\mu[a_\lambda c]],~~~~~~\text{(Jacobi identity)}
\end{eqnarray*}
for $a$, $b$, $c\in R$.
\end{definition}

A  Lie conformal algebra
is called \emph{finite} if it is finitely generated as a
$\mathbb{C}[\partial]$-module. The \emph{rank} of a Lie conformal
algebra $R$ is its rank as a $\mathbb{C}[\partial]$-module. We say a Lie
conformal algebra $R$ is \emph{torsion-free} just means that $R$ as
a $\mathbb{C}[\partial]$-module is torsion-free. Since
$\mathbb{C}[\partial]$ is a principle ideal domain, a finite
torsion-free Lie conformal algebra $R$ is free as a
$\mathbb{C}[\partial]$-module.

In addition, given a Lie conformal algebra $R$, there is an important infinite-dimensional Lie algebra associated with it.
Set $[a_\lambda b]=\sum_{n\in \mathbf{N}}\frac{\lambda^n}{n!}a_{(n)}b$. Let Coeff$(R)$ be the quotient
of the vector space with basis $a_n$ $(a\in R, n\in\mathbb{Z})$ by
the subspace spanned over $\mathbb{C}$ by
elements:
$$(\alpha a)_n-\alpha a_n,~~(a+b)_n-a_n-b_n,~~(\partial
a)_n+na_{n-1},~~~\text{where}~~a,~~b\in R,~~\alpha\in \mathbb{C},~~n\in
\mathbb{Z}.$$ The operation on Coeff$(R)$ is defined as follows:
\begin{equation}\label{106}
[a_m, b_n]=\sum_{j\in \mathbf{N}}\left(\begin{array}{ccc}
m\\j\end{array}\right)(a_{(j)}b)_{m+n-j}.\end{equation} Then,
Coeff$(R)$ is a Lie algebra and it is called the\emph{ coefficient algebra} of $R$ (see \cite{K1}).

For example, $\text{Coeff}(\text{Vir})$ is isomorphic to Witt algebra and $\text{Coeff}(\text{Cur}\mathfrak{g})$ is just the loop algebra associated with $\mathfrak{g}$.

In fact, $\text{Vir}$ and $\text{Cur}\mathfrak{g}$ belong to a class of special Lie conformal algebras named quadratic Lie conformal algebras (see \cite{X1}). Next, we introduce the definition of quadratic Lie conformal algebra.

\begin{definition}
$R$ is a \emph{quadratic Lie conformal algebra}, if $R=\mathbb{C}[\partial]V$ is a Lie conformal algebra as a free
$\mathbb{C}[\partial]$-module and the $\lambda$-bracket is of the following form:
$$[a_{\lambda} b]=\partial u+\lambda v+ w,$$
where $a$, $b$, $u$, $v$, $w\in V$.
\end{definition}

For giving an equivalent characterization of quadratic Lie conformal algebra, we present the definitions of Novikov algebra and
Gel'fand-Dorfman bialgebra.
\begin{definition}
A \emph{Novikov algebra} $A$ is a vector space over $\mathbb{C}$ with a bilinear product $\circ: A\times A\rightarrow A$ satisfying (for any $a$, $b$, $c\in A$):
\begin{eqnarray}
&&(a\circ b)\circ c-a \circ (b\circ c)=(b\circ a)\circ c-b \circ (a\circ c),\\
&&(a\circ b)\circ c=(a\circ c)\circ b.
\end{eqnarray}
\begin{remark}
Novikov algebra was essentially stated in \cite{GD} that it corresponds to a
certain Hamiltonian operator. Moreover, it also appeared
in \cite{BN} from the point of view of Poisson structures of
hydrodynamic type. The name ``Novikov algebra" was given by Osborn
in \cite{Os}.
\end{remark}
\end{definition}

\begin{definition}(see \cite{GD} or \cite{X1})
A \emph{Gel'fand-Dorfman bialgebra} $A$ is a vector space over $\mathbb{C}$ with two algebraic operations $[\cdot,\cdot]$ and $\circ$ such that $(A,[\cdot,\cdot])$ forms a Lie algebra, $(A,\circ)$ forms a Novikov algebra and the following compatibility condition holds:
\begin{eqnarray}
[a\circ b, c]-[a\circ c, b]+[a,b]\circ c-[a,c]\circ b-a\circ [b,c]=0,
\end{eqnarray}
for $a$, $b$, and $c\in A$. We usually denote it by $(A,\circ,[\cdot,\cdot])$.
\end{definition}

An equivalent characterization of quadratic Lie conformal algebra is given as follows.
\begin{theorem}(see \cite{GD} or \cite{X1})
$R=\mathbb{C}[\partial]V$ is a quadratic Lie conformal algebra if and only if the $\lambda$-bracket of $R$ is given as
follows
$$[a_{\lambda} b]=\partial(b\circ a)+[b, a]+\lambda(b\ast a), \text{where $a\ast b=a\circ b+b\circ a$ for $a$, $b\in V$},$$
and $(V, \circ, [\cdot,\cdot])$ is a Gel'fand-Dorfman bialgebra. Therefore, $R$ is called the quadratic Lie conformal algebra corresponding to
the Gel'fand-Dorfman bialgebra $(V, \circ, [\cdot,\cdot])$.
\end{theorem}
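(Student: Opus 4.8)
The plan is to prove the equivalence by translating each Lie-conformal axiom into an algebraic identity on $V$. Since $R=\mathbb{C}[\partial]V$ is free and the bracket is extended from $V\times V$ by sesquilinearity, it suffices to verify skew-symmetry and the Jacobi identity on elements of $V$; so first I would record that a quadratic bracket is determined by three $\mathbb{C}$-bilinear maps $u,v,w\colon V\times V\to V$ via $[a_\lambda b]=\partial u(a,b)+\lambda v(a,b)+w(a,b)$, with conformal sesquilinearity being automatic once the bracket is declared on generators and extended.

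First I would exploit skew-symmetry. Writing $[b_\mu a]=\partial u(b,a)+\mu v(b,a)+w(b,a)$ and substituting $\mu=-\partial-\lambda$ (letting $\partial$ act on the $V$-valued coefficients), the identity $[a_\lambda b]=-[b_{-\partial-\lambda}a]$ becomes a polynomial identity in $\lambda$ and $\partial$. Comparing the coefficients of $\lambda$, of $\partial$, and of $1$ yields, respectively, that $v$ is symmetric, that $u(a,b)+u(b,a)=v(a,b)$, and that $w$ is skew-symmetric. Setting $a\circ b:=u(b,a)$ and $[a,b]:=w(b,a)$ then makes $[\cdot,\cdot]$ antisymmetric and identifies $v(a,b)$ with $b\ast a:=b\circ a+a\circ b$, so that the bracket takes precisely the asserted form $[a_\lambda b]=\partial(b\circ a)+\lambda(b\ast a)+[b,a]$. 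Conversely, these same relations show that a bracket of this form automatically satisfies skew-symmetry whenever $[\cdot,\cdot]$ is antisymmetric, the symmetry of $\ast$ being built into its definition.

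The heart of the argument is the Jacobi identity. I would expand both sides of $[a_\lambda[b_\mu c]]=[[a_\lambda b]_{\lambda+\mu}c]+[b_\mu[a_\lambda c]]$ using the bracket above together with sesquilinearity, which introduces the operator factors $(\lambda+\partial)$, $(\mu+\partial)$ and $-(\lambda+\mu)$ in the appropriate slots, and then compare the coefficients of the monomials in $\lambda,\mu,\partial$ with values in $V$. The key organizing observation is a grading: in the bracket the operation $\circ$ (equivalently $\ast$) always carries one factor from $\{\lambda,\mu,\partial\}$, whereas $[\cdot,\cdot]$ carries none. Hence the terms homogeneous of degree $2$ in $\lambda,\mu,\partial$ involve only $\circ$ and reproduce exactly the two Novikov axioms; the terms homogeneous of degree $1$ mix $\circ$ with $[\cdot,\cdot]$ and reproduce exactly the compatibility condition; and the single degree-$0$ coefficient involves only $[\cdot,\cdot]$ and, after using antisymmetry, is precisely the Jacobi identity of $(V,[\cdot,\cdot])$. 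Thus the conformal Jacobi identity holds if and only if $(V,\circ,[\cdot,\cdot])$ is a Gel'fand-Dorfman bialgebra, which gives both implications at once.

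I expect the main obstacle to be the computational bookkeeping in the degree-$2$ and degree-$1$ parts: there one must collect several distinct monomials (such as $\mu^2$, $\lambda\mu$, and the mixed $\partial$-terms) and check that they collapse to the two Novikov relations and the one compatibility relation without redundancy or over-determination, all while correctly commuting the formal operator $\partial$ past $\lambda$ and $\mu$ and through the substitution used for skew-symmetry. Once the grading is in place the degree-$0$ part is immediate, so the genuine work is confined to verifying that the higher-degree coefficients match the Novikov and compatibility axioms exactly.
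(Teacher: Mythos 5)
The paper offers no proof of this theorem---it is imported from \cite{GD} and \cite{X1}---and your argument is precisely the standard one found in those references: skew-symmetry on generators forces the bracket into the stated form (with $\ast$ symmetric and $[\cdot,\cdot]$ skew), and the conformal Jacobi identity, when sorted by total degree in $\lambda,\mu,\partial$, is equivalent to the two Novikov axioms in degree $2$, the Gel'fand--Dorfman compatibility condition in degree $1$, and the Lie Jacobi identity in degree $0$. Your outline is correct, including the grading observation and the point you flag yourself: the six degree-$2$ coefficients and three degree-$1$ coefficients are redundant, with $\partial^2$ and $\mu\partial$ giving the two Novikov axioms, the $\partial$-coefficient giving compatibility, and the remaining relations following from these together with skew-symmetry.
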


Finally, we introduce the definition of conformal module.
\begin{definition}
A \emph{conformal module} $M$ over a Lie conformal algebra $R$ is a $\mathbb{C}[\partial]$-module endowed with the $\lambda$-action $a_\lambda v$ which defines
a $\mathbb{C}$-bilinear map $R\otimes M\rightarrow M[\lambda]$ such that
\begin{eqnarray*}
&&(\partial a)_\lambda v=-\lambda a_\lambda v,~~~a_\lambda (\partial v)=(\lambda+\partial)a_\lambda v,\\
&& a_\lambda(b_\mu v)-b_\mu(a_\lambda v)=[a_\lambda b]_{\lambda+\mu}v,
\end{eqnarray*}
for all $a$, $b\in R$, $v\in M$.
\end{definition}
If $M$ is finitely generated over $\mathbb{C}[\partial]$, then $M$ is called \emph{finite}.
The \emph{rank} of a Lie conformal
algebra $R$ is its rank as a $\mathbb{C}[\partial]$-module.

In the following, we will simply call a ``conformal module" a ``module". And, for any Lie conformal algebra $R$, ``$R$-module" just means
``conformal module of $R$".

For the Virasoro Lie conformal algebra $\text{Vir}$, it is known from \cite{CK1} that
\begin{proposition}\label{PP1}
All free non-trivial $\text{Vir}$-modules of rank one over $\mathbb{C}[\partial]$ are as follows ($\alpha$, $\beta\in \mathbb{C}$):
\begin{eqnarray*}
M_{\alpha,\beta}=\mathbb{C}[\partial]v,~~L_\lambda v=(\partial+\alpha\lambda+\beta)v.
\end{eqnarray*}
 This module is irreducible if and only if
$\alpha$ is non-zero, and all irreducible  $\text{Vir}$-modules are of this kind.
\end{proposition}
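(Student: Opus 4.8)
The plan is to reduce the problem to a polynomial functional equation and solve it. A free module of rank one is $M=\mathbb{C}[\partial]v$, and since $L_\lambda v\in M[\lambda]$ I can write $L_\lambda v=g(\partial,\lambda)v$ for a unique $g(\partial,\lambda)\in\mathbb{C}[\partial,\lambda]$; the module is nontrivial exactly when $g\neq 0$. The conformal sesquilinearity $a_\lambda(\partial v)=(\partial+\lambda)a_\lambda v$ shows that $L_\lambda(p(\partial)v)=p(\partial+\lambda)g(\partial,\lambda)v$ for any $p$, so the only real constraint comes from the Jacobi-type axiom $L_\lambda(L_\mu v)-L_\mu(L_\lambda v)=[L_\lambda L]_{\lambda+\mu}v$. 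First I would evaluate the right-hand side using $[L_\lambda L]=(\partial+2\lambda)L$ and sesquilinearity, obtaining $(\lambda-\mu)g(\partial,\lambda+\mu)v$; the left-hand side becomes $g(\partial+\lambda,\mu)g(\partial,\lambda)v-g(\partial+\mu,\lambda)g(\partial,\mu)v$. Thus everything reduces to
$$g(\partial+\lambda,\mu)g(\partial,\lambda)-g(\partial+\mu,\lambda)g(\partial,\mu)=(\lambda-\mu)g(\partial,\lambda+\mu).$$

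To solve this I would proceed in three steps. Setting $\mu=0$ and cancelling the factor $g$ (legitimate in the integral domain $\mathbb{C}[\partial,\lambda]$) gives $g(\partial+\lambda,0)-g(\partial,0)=\lambda$, which forces $g(\partial,0)=\partial+\beta$ for some $\beta\in\mathbb{C}$. Next I would bound the degree $d$ of $g$ in $\partial$: comparing the coefficient of $\partial^{2d-1}$ on both sides, the cross terms cancel and one is left with $d(\lambda-\mu)b_d(\lambda)b_d(\mu)$, where $b_d$ is the leading $\partial$-coefficient; since the right-hand side has $\partial$-degree only $d$, this term must vanish when $2d-1>d$, forcing $d=1$. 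Writing $g=b_1(\lambda)\partial+b_0(\lambda)$, the coefficients of $\partial^1$ and $\partial^0$ yield the multiplicative Cauchy equation $b_1(\lambda)b_1(\mu)=b_1(\lambda+\mu)$ (whose only polynomial solution with $b_1(0)=1$ is $b_1\equiv 1$) and then $\lambda b_0(\lambda)-\mu b_0(\mu)=(\lambda-\mu)b_0(\lambda+\mu)$; expanding the latter and matching coefficients via binomial identities shows $b_0$ is affine, $b_0(\lambda)=\alpha\lambda+\beta$. This gives exactly $g(\partial,\lambda)=\partial+\alpha\lambda+\beta$, i.e. the modules $M_{\alpha,\beta}$.

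For the irreducibility criterion I would use that every nonzero $\mathbb{C}[\partial]$-submodule of $\mathbb{C}[\partial]v$ has the form $p(\partial)\mathbb{C}[\partial]v$, and that it is a conformal submodule precisely when $p(\partial)$ divides $p(\partial+\lambda)(\partial+\alpha\lambda+\beta)$ in $\mathbb{C}[\partial,\lambda]$. Evaluating at a root $\partial=r$ of $p$, this requires $p(r+\lambda)(r+\alpha\lambda+\beta)$ to vanish identically in $\lambda$; when $\alpha\neq 0$ both factors are nonzero polynomials in $\lambda$, so $p$ can have no root and must be constant, proving irreducibility. When $\alpha=0$ the factor $\partial+\beta$ is $\lambda$-independent and $(\partial+\beta)\mathbb{C}[\partial]v$ is visibly a proper nonzero submodule, so $M_{0,\beta}$ is reducible.

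The remaining assertion — that every nontrivial finite irreducible Vir-module is of this form — is the genuinely hard part, and I would not expect to recover it from the computation above; it rests on the structure theory of finite conformal modules in \cite{CK1}, whose proof shows that any such module is in fact free of rank one over $\mathbb{C}[\partial]$. The main obstacle is therefore not the functional equation, which is elementary once set up, but this rank-one-ness, which requires the full annihilation-operator analysis of the associated (extended) annihilation algebra rather than a direct manipulation of $\lambda$-brackets.
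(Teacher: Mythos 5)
Your proof is correct, but there is essentially nothing in the paper to compare it against: the paper does not prove Proposition \ref{PP1} at all, it imports it verbatim from \cite{CK1} (``it is known from \cite{CK1} that\dots''). Your argument --- writing $L_\lambda v=g(\partial,\lambda)v$, reducing the module axioms to the single functional equation
$g(\partial+\lambda,\mu)g(\partial,\lambda)-g(\partial+\mu,\lambda)g(\partial,\mu)=(\lambda-\mu)g(\partial,\lambda+\mu)$,
and solving it via the $\mu=0$ specialization, the $\partial$-degree bound, and the two coefficient equations --- is a correct, self-contained derivation of the classification of free nontrivial rank-one modules, and your divisibility criterion $p(\partial)\mid p(\partial+\lambda)(\partial+\alpha\lambda+\beta)$ settles the irreducibility claim in both directions. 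One small point of bookkeeping: the $\partial^{2d-1}$ comparison by itself only forces $d\le 1$; you get $d=1$ by combining it with $g(\partial,0)=\partial+\beta$ from your first step, which rules out $d=0$. You are also right to flag that the final clause --- that \emph{every} finite nontrivial irreducible $\text{Vir}$-module is some $M_{\alpha,\beta}$ with $\alpha\neq 0$ --- cannot be extracted from this computation, since it requires knowing that such a module is free of rank one; that rests on the structure theory (extended annihilation algebra) of \cite{CK1}, which is precisely the source the paper cites for the whole proposition. So your write-up proves elementarily what the paper leaves as a citation, and defers only what must be deferred; it is also worth noting that your functional-equation manipulations are the same ones the paper itself carries out in Section 5 when it classifies rank-one modules of $TSV(a,b)$ and $TSV(c)$ by reducing the $L$-action to this proposition.
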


\section{Two classes of Schr\"{o}dinger-Virasoro type Lie conformal algebras}
In this section, we will introduce two new classes of Schr\"{o}dinger-Virasoro type Lie conformal algebras from the point view of Lie conformal algebra.

First, let us recall the Schr\"{o}dinger-Virasoro Lie conformal algebra (see \cite{SY}) and the Schr\"{o}dinger-Virasoro type Lie conformal algebra introduced in \cite{WXX}.

The Schr\"{o}dinger-Virasoro Lie conformal algebra is $SV=\mathbb{C}[\partial]L\oplus\mathbb{C}[\partial]M\oplus\mathbb{C}[\partial]Y$ with the following non-trivial $\lambda$-brackets:
\begin{eqnarray}
&&[L_\lambda L]=(\partial+2\lambda)L, ~~~[L_\lambda Y]=(\partial+\frac{3}{2}\lambda)Y,\\
&&[Y_\lambda Y]=(\partial+2\lambda)M,~~~[L_\lambda M]=(\partial+\lambda)M.
\end{eqnarray}
It is obtained from the Schr\"{o}dinger-Virasoro Lie algebra.

The Schr\"{o}dinger-Virasoro type Lie conformal algebra introduced in \cite{WXX} is $DSV=\mathbb{C}[\partial]L\oplus\mathbb{C}[\partial]M\oplus\mathbb{C}[\partial]Y$ with the following
non-trivial $\lambda$-brackets:
\begin{eqnarray}
&&[L_\lambda L]=(\partial+2\lambda)L, ~~~[L_\lambda Y]=\partial Y,\\
&&[Y_\lambda Y]=(\partial+2\lambda)M,~~~[L_\lambda M]=(\partial-2\lambda)M.
\end{eqnarray}

Obviously, $SV$ and $DSV$ are two non-simple torsion-free Lie conformal algebras of rank 3. Moreover, in both algebras, $\mathbb{C}[\partial]L$ is the Virasoro Lie conformal algebra, and $\mathbb{C}[\partial]M$ and $\mathbb{C}[\partial]Y$ are non-trivial modules of $\mathbb{C}[\partial]L$. Base on these facts, we begin to give a generalization of
$SV$ and $DSV$.

Set $TSV=\mathbb{C}[\partial]L\oplus\mathbb{C}[\partial]M\oplus\mathbb{C}[\partial]Y$. Let $\mathbb{C}[\partial]L$ be the Virasoro Lie conformal algebra. Assume that $\mathbb{C}[\partial]M$ and $\mathbb{C}[\partial]Y$ are non-trivial modules of $\mathbb{C}[\partial]L$. According to $SV$ and $DSV$, we also set $[Y_\lambda M]=[M_\lambda M]=0$. Moreover, with these assumptions and by
Proposition \ref{PP1}, we set the $\lambda$-brackets on $TSV$ as follows:
\begin{eqnarray}
\label{e31}&&[L_\lambda L]=(\partial+2\lambda)L,~~~~[L_\lambda Y]=(\partial+\alpha_1\lambda +\alpha_2)Y,\\
\label{e32}&&[L_\lambda M]=(\partial+\beta_1\lambda +\beta_2)M,~~~[Y_\lambda Y]=(\partial+\gamma_1\lambda +\gamma_2)M,\\
\label{e33}&&[Y_\lambda M]=[M_\lambda M]=0,
\end{eqnarray}
for some $\alpha_1$, $\alpha_2$, $\beta_1$, $\beta_2$, $\gamma_1$, $\gamma_2\in \mathbb{C}$.

\begin{theorem}
$TSV$ is a Lie conformal algebra with the $\lambda$-brackets given by
(\ref{e31})-(\ref{e33}) if and only if $\gamma_1=2$, $\gamma_2=0$, $\beta_2=2\alpha_2$ and $\beta_1=2(\alpha_1-1)$.
\end{theorem}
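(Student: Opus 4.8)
The plan is to verify that the proposed $\lambda$-brackets \eqref{e31}--\eqref{e33} satisfy all three axioms of a Lie conformal algebra, and to extract the stated constraints on the parameters from the one axiom that genuinely constrains them. First I would observe that conformal sesquilinearity is automatic: each bracket is given as $(\partial + c_1\lambda + c_2)$ times a fixed generator, and any $\lambda$-bracket written in this quadratic form extends uniquely to a sesquilinear map, so this axiom imposes no conditions. Second, I would check skew-symmetry on each pair of generators. The bracket $[L_\lambda L]=(\partial+2\lambda)L$ is the standard Virasoro bracket and is known to be skew-symmetric. For the mixed brackets such as $[L_\lambda Y]$, skew-symmetry forces $[Y_\lambda L]=-[L_{-\lambda-\partial}Y]$, which simply \emph{defines} the reverse brackets rather than constraining the parameters; similarly $[Y_\lambda Y]=(\partial+\gamma_1\lambda+\gamma_2)M$ must equal $-[Y_{-\lambda-\partial}Y]$, and applying skew-symmetry to this self-bracket is the first place a real constraint appears. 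Writing $-[Y_{-\lambda-\partial}Y]=-(\partial+\gamma_1(-\lambda-\partial)+\gamma_2)M=(\,\gamma_1-1)\partial + \gamma_1\lambda - \gamma_2\,)M$ and matching coefficients against $(\partial+\gamma_1\lambda+\gamma_2)M$ should already yield $\gamma_1=2$ (wait --- let me be careful: matching the $\partial$-coefficient gives $1=\gamma_1-1$, i.e.\ $\gamma_1=2$, and the constant gives $\gamma_2=-\gamma_2$, i.e.\ $\gamma_2=0$).

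The heart of the proof is the Jacobi identity, which I would check on all relevant triples of generators. Because $[Y_\lambda M]=[M_\lambda M]=0$, and because $M$ acts trivially except as a target, the only triples that can produce nontrivial constraints are those involving $L$, $Y$ together, most importantly $(L,Y,Y)$ and $(L,L,Y)$, $(L,L,M)$. The triple $(L,L,Y)$ expands using $[L_\lambda Y]=(\partial+\alpha_1\lambda+\alpha_2)Y$ and the Virasoro bracket; this is the standard check that $\mathbb{C}[\partial]Y$ is a module over $\mathrm{Vir}$, which by Proposition \ref{PP1} is automatic for any $\alpha_1,\alpha_2$, so it yields no new condition. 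The decisive computation is the Jacobi identity on $(L,Y,Y)$: here I would compute $[L_\lambda[Y_\mu Y]]$ using \eqref{e32}, and compare with $[[L_\lambda Y]_{\lambda+\mu}Y]+[Y_\mu[L_\lambda Y]]$. The left side involves $[L_\lambda M]=(\partial+\beta_1\lambda+\beta_2)M$, so $\beta_1,\beta_2$ enter, while the right side involves two applications of the $L$--$Y$ and $Y$--$Y$ brackets, so $\alpha_1,\alpha_2,\gamma_1,\gamma_2$ enter. Matching the coefficients of $\partial M$, $\lambda M$, $\mu M$, and the constant term $M$ across this identity is where the relations $\beta_2=2\alpha_2$ and $\beta_1=2(\alpha_1-1)$ should emerge (consistently with the already-derived $\gamma_1=2,\gamma_2=0$).

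The main obstacle will be the bookkeeping in the $(L,Y,Y)$ Jacobi computation: one must correctly propagate the operator $(\partial+\cdots)$ through nested brackets, remembering that in $[[L_\lambda Y]_{\lambda+\mu}Y]$ the outer bracket has spectral parameter $\lambda+\mu$ and that the inner $\partial$ from $[L_\lambda Y]$ becomes $-\lambda$ by sesquilinearity, while in $[Y_\mu[L_\lambda Y]]$ the inner $\partial$ passes through as $(\mu+\partial)$ acting on the result. These substitutions are the standard but error-prone part of conformal-algebra calculations, and keeping the four scalar coefficients straight after collecting like terms in $\partial,\lambda,\mu$ is the crux. Once the system of linear equations in $\alpha_i,\beta_i,\gamma_i$ obtained from all triples is assembled, solving it should give exactly $\gamma_1=2$, $\gamma_2=0$, $\beta_2=2\alpha_2$, $\beta_1=2(\alpha_1-1)$, with $\alpha_1,\alpha_2$ free; conversely, substituting these values back and re-running the checks confirms sufficiency, establishing the ``if and only if.''
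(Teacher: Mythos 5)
Your proposal follows essentially the same route as the paper: skew-symmetry of the self-bracket $[Y_\lambda Y]$ forces $\gamma_1=2$, $\gamma_2=0$; the triples $(L,L,Y)$ and $(L,L,M)$ impose nothing new because the brackets were chosen, via Proposition \ref{PP1}, precisely so that $\mathbb{C}[\partial]Y$ and $\mathbb{C}[\partial]M$ are $\text{Vir}$-modules; all triples involving $M$ twice, or $Y$ three times, vanish identically; and the one substantive check is the Jacobi identity on $(L,Y,Y)$, which produces $\beta_2=2\alpha_2$ and $\beta_1=2(\alpha_1-1)$. This is exactly the structure of the paper's proof.

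There is, however, one concrete error at the step you yourself identify as the crux. In $[[L_\lambda Y]_{\lambda+\mu}Y]$, the inner $\partial$ coming from $[L_\lambda Y]=(\partial+\alpha_1\lambda+\alpha_2)Y$ does \emph{not} become $-\lambda$: by sesquilinearity $[(\partial Y)_{\lambda+\mu}Y]=-(\lambda+\mu)[Y_{\lambda+\mu}Y]$, so it becomes $-(\lambda+\mu)$, yielding the factor $((\alpha_1-1)\lambda-\mu+\alpha_2)$. With the correct substitution the $(L,Y,Y)$ identity reads
\begin{gather*}
(\partial+\lambda+2\mu)(\partial+\beta_1\lambda+\beta_2)=((\alpha_1-1)\lambda-\mu+\alpha_2)(\partial+2\lambda+2\mu)\\
+(\partial+\alpha_1\lambda+\mu+\alpha_2)(\partial+2\mu),
\end{gather*}
which is precisely the paper's equation (\ref{e34}), and comparing coefficients gives the stated relations. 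If you instead replace $\partial$ by $-\lambda$, the first factor becomes $((\alpha_1-1)\lambda+\alpha_2)$, which contains no $\partial$ and no $\mu$, and the resulting system is inconsistent: for instance the coefficient of $\mu\partial$ is $2$ on the left-hand side but $3$ on the right-hand side, independent of the parameters. So the computation as literally described would break down rather than yield the claimed constraints; once this substitution is corrected, your argument coincides with the paper's proof, including the converse (sufficiency) direction.
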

\begin{proof}
According to $[Y_\lambda Y]=-[Y_{-\lambda-\partial}Y]$, it is easy to obtain $\gamma_1=2$ and $\gamma_2=0$.
Then, by the assumption on $TSV$, we only need to check that whether the Jacobi identity holds when $a
=L$, $b=c=Y$.
Taking $a=L$, $b=Y$, $c=Y$ in the Jacobi identity and using (\ref{e31})-(\ref{e33}), by comparing the coefficients of $M$, we obtain
\begin{gather}
\label{e34}
(\partial+\lambda+2\mu)(\partial+\beta_1\lambda+\beta_2)=((\alpha_1-1)\lambda-\mu+\alpha_2)(\partial+2\lambda+2\mu)\\
+(\partial+\alpha_1\lambda+\mu+\alpha_2)(\partial+2\mu).
\nonumber
\end{gather}
By comparing the coefficients of $\partial^2$, $\lambda\partial$, $\mu\partial$, $\lambda^2$, $\mu^2$, $\lambda\mu$, $\lambda$, $\mu$,
$\partial$, $\lambda^0$, (\ref{e34}) holds if and only if $\beta_2=2\alpha_2$, $\beta_1=2(\alpha_1-1)$.
Thus, this theorem holds.
\end{proof}

Then, for the readers' convenience, we present the following definition in detail.
\begin{definition}
$TSV(a,b)=\mathbb{C}[\partial]L\oplus\mathbb{C}[\partial]M\oplus\mathbb{C}[\partial]Y$ is a Lie conformal algebra with the following $\lambda$-brackets:
\begin{eqnarray}
&&[L_\lambda L]=(\partial+2\lambda)L,~~~~[L_\lambda Y]=(\partial+a\lambda +b)Y,\\
&&[L_\lambda M]=(\partial+2(a-1)\lambda +2b)M,~~~[Y_\lambda Y]=(\partial+2\lambda)M,\\
&&[Y_\lambda M]=[M_\lambda M]=0,
\end{eqnarray}
for some $a$, $b\in \mathbb{C}$.
\end{definition}
\begin{remark}
When $a=\frac{3}{2}$, $b=0$, $TSV(\frac{3}{2},0)$ is just the Schr\"{o}dinger-Virasoro Lie conformal algebra studied in \cite{SY}; when $a=b=0$, $TSV(0,0)$ is just the Schr\"{o}dinger-Virasoro type Lie conformal algebra studied in \cite{WXX}.
\end{remark}
\begin{remark}\label{r1}
Obviously, $TSV(a,b)$ is a quadratic Lie conformal algebra corresponding to the 3-dimensional Gel'fand-Dorfman bialgebra
$V(a,b)=\mathbb{C}L\oplus\mathbb{C}Y\oplus\mathbb{C}M$ with the Novikov operation ``$\circ$" and the Lie bracket defined as follows:
\begin{eqnarray}
&&L\circ L=L,~~~L\circ Y=(a-1)Y,~~~Y\circ L=Y,~~~L\circ M=(2a-3)M,\\
&&M\circ L=M,~~Y\circ Y=M,~~Y\circ M=M\circ Y=M\circ M=0,\\
&&[L,L]=[Y,Y]=[Y,M]=[M,M]=0,~~~[L,Y]=-b Y,~~~[L,M]=-2bM.
\end{eqnarray}
\end{remark}

\begin{remark}\label{rr}
In fact, in a general case, we can assume that $[Y_\lambda Y]=P(\lambda,\partial)M$ in (\ref{e32}) for some non-zero element $P(\lambda,\partial)\in \mathbb{C}[\lambda,\partial]$. By the skew-symmetry, we obtain
\begin{eqnarray}
\label{e36}P(\lambda,\partial)=-P(-\lambda-\partial,\partial).
\end{eqnarray}
Let $Q(\lambda,\partial)=P(\lambda,-\lambda-\partial)$. By (\ref{e36}), we get $Q(\lambda,-\lambda-\partial)=-Q(-\lambda-\partial,\lambda)$. Then, letting $x=\lambda$, $y=-\lambda-\partial$,  $Q(x,y)=-Q(y,x)$. Therefore, $Q(x,y)=(x-y)S(x,y)$, where $S(x,y)$ is a symmetric polynomial.
Then, by the Jacobi identity holds when $a=L$, $b=Y$, $c=Y$, we obtain
\begin{gather}
\label{e37}
S(\mu,-\lambda-\mu-\partial)(\partial+\lambda+2\mu)(\partial+\beta_1\lambda+\beta_2)
=((\alpha_1-1)\lambda-\mu+\alpha_2)(\partial+2\lambda+2\mu)\cdot\\
S(\lambda+\mu,-\lambda-\mu-\partial)
+(\partial+\alpha_1\lambda+\mu+\alpha_2)(\partial+2\mu)S(\mu,-\mu-\partial).\nonumber
\end{gather}
However, it seems
impossible to give all solutions of the equation directly.
But, when the degree $m$ of $S(x,y)$ is fixed, by comparing the coefficients of
$\partial^i$ for $i\in [0,m+2]$, we can determine all solutions of (\ref{e37}). Of course, when $m$ is large, the
computation is complicated.

For example, when $S(x,y)$ is a symmetric ploynomial of degree one, we set $S(x,y)=x+y+b$ where $b\in \mathbb{C}$.
Then, taking it into (\ref{e37}), by some computations, we can immediately obtain
$\beta_1=0$, $\beta_2=2\alpha_2$, $\alpha_1=\frac{3}{2}$, $b=-2\alpha_2$. Therefore,
$P(\lambda,\partial)=(\partial+2\lambda)(-\partial-2\alpha_2)$.
\end{remark}

By the above discussion in Remark \ref{rr}, we obtain the following Lie conformal algebra.
\begin{definition}
$TSV(c)=\mathbb{C}[\partial]L\oplus\mathbb{C}[\partial]M\oplus\mathbb{C}[\partial]Y$ is a Lie conformal algebra with the following $\lambda$-brackets:
\begin{eqnarray}
&&[L_\lambda L]=(\partial+2\lambda)L,~~~~[L_\lambda Y]=(\partial+\frac{3}{2}\lambda +c)Y,\\
&&[L_\lambda M]=(\partial+2c)M,~~~[Y_\lambda Y]=(\partial+2\lambda)(-\partial-2c)M,\\
&&[Y_\lambda M]=[M_\lambda M]=0,
\end{eqnarray}
for some $c\in \mathbb{C}$.
\end{definition}

\begin{remark}
By the definition of coefficient algebra of a Lie conformal algebra,
$\text{Coeff}(TSV(a,b))=\oplus_{i\in \mathbb{Z}}\mathbb{C}L_i\oplus_{i\in \mathbb{Z}}\mathbb{C}Y_i\oplus_{i\in \mathbb{Z}}\mathbb{C}M_i$  is an infinite-dimensional Lie algebra with the following Lie brackets (the other Lie brackets vanishing):
\begin{eqnarray*}
&&[L_m,L_n]=(m-n)L_{m+n-1},~~~[L_m,Y_n]=(m(a-1)-n)Y_{m+n-1}+bY_{m+n},\\
&&[L_m,M_n]=(m(2a-3)-n)M_{m+n-1}+2bM_{m+n},~~~~[Y_m,Y_n]=(m-n)M_{m+n-1}.
\end{eqnarray*}

Similarly, $\text{Coeff}(TSV(c))=\oplus_{i\in \mathbb{Z}}\mathbb{C}L_i\oplus_{i\in \mathbb{Z}}\mathbb{C}Y_i\oplus_{i\in \mathbb{Z}}\mathbb{C}M_i$ is an infinite-dimensional Lie algebra with the following Lie brackets (the other Lie brackets vanishing):
\begin{eqnarray*}
&&[L_m,L_n]=(m-n)L_{m+n-1},~~~[L_m,Y_n]=(\frac{m}{2}-n)Y_{m+n-1}+cY_{m+n},\\
&&[L_m,M_n]=-(m+n)M_{m+n-1}+2cM_{m+n},~~~~\\
&&[Y_m,Y_n]=(m-n)(m+n-1)M_{m+n-2}+2c(n-m)M_{m+n-1}.
\end{eqnarray*}

\section{Central extensions of $TSV(a,b)$ and $TSV(c)$}
In this section, we will study central extensions of $TSV(a,b)$ and $TSV(c)$ by a one-dimensional center $\mathbb{C}\mathfrak{c}$.

An extension of a Lie conformal algebra $R$ by an abelian Lie conformal algebra $C$ is a short exact sequence of Lie conformal algebras
\begin{eqnarray*}
0\rightarrow C\rightarrow \widehat{R}\rightarrow R\rightarrow 0.
\end{eqnarray*}
$\widehat{R}$ is called an \emph{extension} of $R$ by $C$ in this case. This extension is called \emph{central} if $\partial C=0$ and
$[C_\lambda \widehat{R}]=0$.

In the following, we focus on the central extension $\widehat{R}$ of $R$ by a one-dimensional center $\mathbb{C}\mathfrak{c}$. This implies that $\widehat{R}=R\oplus \mathbb{C}\mathfrak{c}$, and
\begin{eqnarray*}
[a_\lambda b]_{\widehat{R}}=[a_\lambda b]_R+\alpha_\lambda(a,b)\mathfrak{c}, \text{for~~all~~$a$, $b\in R$,}
\end{eqnarray*}
where $\alpha_\lambda: R\times R\rightarrow \mathbb{C}[\lambda]$ is a $\mathbb{C}$-bilinear map. By the axioms of Lie conformal algebra, $\alpha_\lambda$ should satisfy the following properties (for all $a$, $b$, $c\in R$) :
\begin{eqnarray}
&&\label{ef3}\alpha_\lambda(\partial a,b)=-\lambda \alpha_\lambda(a,b)=-\alpha_\lambda(a,\partial b),\\
&&\label{ef4}\alpha_\lambda(a,b)=-\alpha_{-\lambda}(b,a),\\
&&\label{ec1}\alpha_\lambda(a,[b_\mu c])-\alpha_\mu(b,[a_\lambda c])=\alpha_{\lambda+\mu}([a_\lambda b],c).
\end{eqnarray}

Since $TSV(a,b)$ is a quadratic Lie conformal algebra, for studying its central extensions, we recall a proposition.
\begin{proposition}\label{rr1}(see \cite{H})
Let $\widehat{R}=R\oplus\mathbb{C}\mathfrak{c}$ be a central extension of quadratic Lie conformal algebra $R=\mathbb{C}[\partial]V$  corresponding to $(V,\circ, [\cdot,\cdot])$ by a one-dimensional center $\mathbb{C}\mathfrak{c}$.
Set the $\lambda$-bracket of $\widehat{R}$ by
\begin{eqnarray}
\widetilde{[a_\lambda b]}=\partial(b\circ a)+\lambda(a\ast b)+[b,a]+\alpha_\lambda(a,b)\mathfrak{c},
\end{eqnarray}
where $a$, $b\in V$ and $\alpha_\lambda(a,b)\in \mathbb{C}[\lambda]$. Assume that $\alpha_\lambda(a,b)=\sum_{i=0}^n\lambda^i\alpha_i(a,b)$ for any $a$, $b\in V$, where there exist some $a$, $b \in V$ such that
$\alpha_n(a,b)\neq 0$. Then, we obtain (for any $a$, $b$, $c\in V$)\\
(1) If $n>3$,  $\alpha_n(a\circ b, c)=0$ ;\\
(2) If $n\leq 3$,
\begin{eqnarray}
\label{eqqq1}&&\alpha_i(a,b)=(-1)^{i+1}\alpha_i(b,a), \text{ for any $i\in\{0,1,2,3\}$}, \\
\label{eqqe1}&&\alpha_3(a,c\circ b)=\alpha_3(a\circ b,c)=\alpha_3(b\circ a, c),\\
\label{eqq2}&&\alpha_2(a,c\circ b)+\alpha_3(a,[c,b])=\alpha_2(a\circ b,c)+\alpha_3([b,a],c),\\
\label{eqq3}&&\alpha_2(a,b\ast c)+\alpha_2(b\circ a,c)=2\alpha_2(a\circ b,c)+3\alpha_3([b,a],c),\\
\label{eqq4}&&\alpha_1(a,c\circ b)+\alpha_2(a,[c,b])=\alpha_1(a\circ b,c)+\alpha_2([b,a],c),\\
\label{eqq5}&&\alpha_1(a,b\ast c)-\alpha_1(b,a\ast c)=-\alpha_1(b\circ a,c)+\alpha_1(a\circ b,c)+2\alpha_2([b,a],c),\\
\label{eqq6}&&\alpha_0(a,c\circ b)+\alpha_1(a,[c,b])-\alpha_0(b, a\ast c)=\alpha_0(a\circ b,c)+\alpha_1([b,a],c),\\
\label{eqq7}&&\alpha_0(a,[c,b])-\alpha_0(b,[c,a])=\alpha_0([b,a],c).
\end{eqnarray}
\end{proposition}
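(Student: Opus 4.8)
The plan is to reduce the three cocycle axioms (\ref{ef3})--(\ref{ec1}) to constraints on the coefficient maps $\alpha_i$. First I note that by the sesquilinearity (\ref{ef3}) the map $\alpha_\lambda$ is completely determined by its restriction to $V\times V$, so it is enough to impose (\ref{ef4}) and (\ref{ec1}) for $a,b,c\in V$. Substituting $\alpha_\lambda(a,b)=\sum_i\lambda^i\alpha_i(a,b)$ into the skew-symmetry relation (\ref{ef4}) gives $\sum_i\lambda^i\alpha_i(a,b)=\sum_i(-1)^{i+1}\lambda^i\alpha_i(b,a)$, and comparing coefficients of $\lambda^i$ yields $\alpha_i(a,b)=(-1)^{i+1}\alpha_i(b,a)$ for every $i$; this is (\ref{eqqq1}) and it settles (\ref{ef4}). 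All of the remaining content must therefore come from the single identity (\ref{ec1}).

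Next I would substitute the explicit quadratic bracket $[a_\lambda b]=\partial(b\circ a)+\lambda(a\ast b)+[b,a]$ into (\ref{ec1}) and use (\ref{ef3}) to move each $\partial$ through $\alpha$ as a scalar, replacing $\alpha_\lambda(a,\partial w)$ by $\lambda\alpha_\lambda(a,w)$ and $\alpha_{\lambda+\mu}(\partial w,c)$ by $-(\lambda+\mu)\alpha_{\lambda+\mu}(w,c)$. This turns (\ref{ec1}) into the two-variable polynomial identity
\begin{align*}
&\lambda\alpha_\lambda(a,c\circ b)+\mu\alpha_\lambda(a,b\ast c)+\alpha_\lambda(a,[c,b])
-\mu\alpha_\mu(b,c\circ a)-\lambda\alpha_\mu(b,a\ast c)-\alpha_\mu(b,[c,a])\\
&\qquad=-(\lambda+\mu)\alpha_{\lambda+\mu}(b\circ a,c)+\lambda\alpha_{\lambda+\mu}(a\ast b,c)+\alpha_{\lambda+\mu}([b,a],c),
\end{align*}
which has to hold identically in $\lambda,\mu$. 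After inserting $\alpha_\lambda=\sum_i\lambda^i\alpha_i$ and expanding every power $(\lambda+\mu)^i$ by the binomial theorem, the whole problem becomes a comparison of the coefficients of the monomials $\lambda^p\mu^q$. The decisive structural remark is that each summand on the left-hand side is a pure power of $\lambda$ (possibly times one $\mu$) or a pure power of $\mu$ (possibly times one $\lambda$); hence the left-hand side contributes nothing to any monomial $\lambda^p\mu^q$ with $p\geq2$ and $q\geq2$, whereas on both sides only the $\alpha_n$-terms reach the top total degree $n+1$.

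The core of the proof, and the point where the dichotomy at $n=3$ is produced, is the comparison at total degree $n+1$. For an interior monomial $\lambda^p\mu^q$ with $p,q\geq2$ and $p+q=n+1$ the left-hand side vanishes, and reading off the right-hand side, using $a\ast b=a\circ b+b\circ a$ together with $\binom{n+1}{q}=\binom{n}{q}+\binom{n}{q-1}$, collapses to the single relation
\begin{align*}
\binom{n}{q}\alpha_n(a\circ b,c)-\binom{n}{q-1}\alpha_n(b\circ a,c)=0.
\end{align*}
If $n>3$, one may take $q=2$ and $q=3$; the two relations so obtained have coefficient determinant $-n^2(n-1)(n+1)/12\neq0$, so $\alpha_n(a\circ b,c)=0$, which is part (1). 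If $n=3$, only $q=2$ survives and the relation degenerates to $\alpha_3(a\circ b,c)=\alpha_3(b\circ a,c)$; combined with the boundary comparison at $\lambda^{4}\mu^{0}$, which gives $\alpha_3(a,c\circ b)=\alpha_3(a\circ b,c)$, this yields exactly the chain (\ref{eqqe1}). I expect this interior-monomial analysis to be the main obstacle, since it requires tracking the binomial weights precisely and checking that the determinant is nonzero exactly for $n\geq4$.

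For $n\leq3$ it remains to descend through the lower total degrees. At total degree $d$ the monomials that actually occur couple $\alpha_d$, carried by the Lie arguments $[c,b],[c,a],[b,a]$, with $\alpha_{d-1}$, carried by the Novikov arguments $c\circ b,a\ast c,b\circ a$; comparing the two independent monomials at degree $3$ produces (\ref{eqq2}) and (\ref{eqq3}), those at degree $2$ produce (\ref{eqq4}) and (\ref{eqq5}), the coefficient of $\lambda$ produces (\ref{eqq6}), and the constant term produces (\ref{eqq7}). Each of these is a routine coefficient extraction from the displayed identity, so the remaining work is organizational rather than conceptual, and the proposition follows by assembling these relations.
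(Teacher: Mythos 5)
One thing must be said first: the paper itself offers no proof of Proposition \ref{rr1} --- it is imported verbatim from \cite{H} with a ``(see \cite{H})'' --- so there is no in-paper argument to compare yours against, and what follows is an assessment of your proposal on its own terms. It is correct, and it is the argument one would expect (surely in essence the one in \cite{H}): reduce to generators via (\ref{ef3}), expand the cocycle identity (\ref{ec1}) using the quadratic bracket, and compare coefficients of the monomials $\lambda^p\mu^q$. I checked the main computations. Your expanded two-variable identity is right, as is the structural observation that the left-hand side carries no monomial $\lambda^p\mu^q$ with $p,q\geq 2$. The interior-monomial relation at top total degree, $\binom{n}{q}\alpha_n(a\circ b,c)=\binom{n}{q-1}\alpha_n(b\circ a,c)$ for $2\leq q\leq n-1$, is correct, and for $n\geq 4$ the choices $q=2,3$ give a linear system with determinant $\binom{n}{1}\binom{n}{3}-\binom{n}{2}^2=-\frac{1}{12}n^2(n-1)(n+1)\neq 0$, forcing $\alpha_n(a\circ b,c)=0$, which is part (1); for $n=3$ the single interior monomial $\lambda^2\mu^2$ together with the boundary monomial $\lambda^4$ gives exactly the chain (\ref{eqqe1}). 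The lower-degree extractions you declared routine do work out as claimed: the coefficients of $\lambda^3$, $\lambda^2\mu$, $\lambda^2$, $\lambda\mu$, $\lambda$ and the constant term yield (\ref{eqq2}), (\ref{eqq3}), (\ref{eqq4}), (\ref{eqq5}), (\ref{eqq6}), (\ref{eqq7}) in that order, and the remaining ($\mu$-heavy) monomials merely reproduce these relations with $a$ and $b$ interchanged, so nothing is hidden in the part you skipped. Two trivial points you could make explicit: skew-symmetry gives (\ref{eqqq1}) for every $i$, not only $i\in\{0,1,2,3\}$; and when $n<3$ the identities of part (2) are to be read with $\alpha_i=0$ for $i>n$, which your expansion handles automatically by padding with zero coefficients.
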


Then, central extensions of $TSV(a,b)$ by a one-dimensional center $\mathbb{C}\mathfrak{c}$ are characterized as follows.
\begin{theorem}\label{th2}
(1) If $a\neq 1$ and $a\neq 2$ or $b\neq 0$, any central extension of $TSV(a,b)$ by a one-dimensional center $\mathbb{C}\mathfrak{c}$ is of the following form: $\widetilde{TSV(a,b)}=TSV(a,b)\oplus \mathbb{C}\mathfrak{c}$ and the non-trivial $\lambda$-brackets are given as follows:
\begin{eqnarray}
&&[L_\lambda L]=(\partial+2\lambda)L+(A\lambda+B\lambda^3)\mathfrak{c},\\
&&[L_\lambda Y]=(\partial+a\lambda+b)Y+(C+D\lambda)\mathfrak{c},\\
&&[Y_\lambda Y]=(\partial+2\lambda)M+E\lambda \mathfrak{c},\\
&&[L_\lambda M]=(\partial+2(a-1)\lambda+2b)M+(bE+(a-1)E\lambda)\mathfrak{c},
\end{eqnarray}
where $A$, $B$, $C$, $D$, $E\in \mathbb{C}$ and $aC=bD$.\\

(2) If $a=2$ and $b=0$, any central extension of $TSV(2,0)$ by a one-dimensional center $\mathbb{C}\mathfrak{c}$ is of the following form: $\widetilde{TSV(2,0)}=TSV(2,0)\oplus \mathbb{C}\mathfrak{c}$ and the non-trivial $\lambda$-brackets are given as follows :
\begin{eqnarray}
&&[L_\lambda L]=(\partial+2\lambda)L+(A\lambda+B\lambda^3)\mathfrak{c},\\
&&[L_\lambda Y]=(\partial+2\lambda)Y+(C\lambda+D\lambda^3)\mathfrak{c},\\
&&[Y_\lambda Y]=(\partial+2\lambda)M+(E\lambda+F\lambda^3) \mathfrak{c},\\
&&[L_\lambda M]=(\partial+2\lambda)M+(E\lambda+F\lambda^3)\mathfrak{c},
\end{eqnarray}
where $A$, $B$, $C$, $D$, $E$, $F\in \mathbb{C}$.\\

(3) If $a=1$ and $b=0$, any central extension of $TSV(1,0)$ by a one-dimensional center $\mathbb{C}\mathfrak{c}$ is of the following form: $\widetilde{TSV(1,0)}=TSV(1,0)\oplus \mathbb{C}\mathfrak{c}$ and the non-trivial $\lambda$-brackets are given as follows:
\begin{eqnarray}
&&[L_\lambda L]=(\partial+2\lambda)L+(A\lambda+B\lambda^3)\mathfrak{c},\\
&&[L_\lambda Y]=(\partial+\lambda)Y+(C\lambda+D\lambda^2)\mathfrak{c},~~[L_\lambda M]=\partial M,\\
&&[Y_\lambda Y]=(\partial+2\lambda)M+E\lambda \mathfrak{c},~~[Y_\lambda M]=F\mathfrak{c}.
\end{eqnarray}
where $A$, $B$, $C$, $D$, $E$, $F\in \mathbb{C}$.

\end{theorem}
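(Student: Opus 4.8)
The plan is to invoke Proposition \ref{rr1} for the three-dimensional Gel'fand--Dorfman bialgebra $V(a,b)$ recorded in Remark \ref{r1} and to reduce the classification to solving the linear system (\ref{eqqq1})--(\ref{eqq7}) for the scalar unknowns $\alpha_i(x,y)$ with $x,y\in\{L,Y,M\}$ and $i\in\{0,1,2,3\}$. Since $TSV(a,b)$ is quadratic, every central extension has $\lambda$-bracket of the shape displayed in Proposition \ref{rr1}, so the whole problem becomes finite-dimensional linear algebra in these coefficients, and the three bracket tables to be produced are just a readout of the solution space written back into conformal notation.

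First I would bound the degree $n$ of $\alpha_\lambda$. Every basis vector of $V(a,b)$ is realized as a Novikov product, because $L\circ L=L$, $Y\circ L=Y$ and $M\circ L=M$ (alternatively $Y\circ Y=M$). Hence part (1) of Proposition \ref{rr1} forces $\alpha_n(L,c)=\alpha_n(Y,c)=\alpha_n(M,c)=0$ for all $c$ as soon as $n>3$, i.e.\ $\alpha_n\equiv 0$, contradicting the maximality of $n$. Therefore $\alpha_\lambda$ has degree at most $3$ and the relations (\ref{eqqq1})--(\ref{eqq7}) govern it completely.

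Next I would exploit the parity relation (\ref{eqqq1}), namely $\alpha_i(x,y)=(-1)^{i+1}\alpha_i(y,x)$, which in particular annihilates the diagonal values at even $i$ and roughly halves the number of unknowns. Then I would feed the nonzero structure constants of $V(a,b)$ into the remaining identities degree by degree: first the cubic relations (\ref{eqqe1}) and (\ref{eqq2}), then the quadratic relations (\ref{eqq3})--(\ref{eqq4}), and finally the low-order relations (\ref{eqq5})--(\ref{eqq7}) which are the only ones that see the Lie bracket $[L,Y]=-bY$, $[L,M]=-2bM$. Because $\mathbb{C}[\partial]L$ is the Virasoro conformal algebra, the $\alpha_\lambda(L,L)$-data will at once reproduce the Virasoro $2$-cocycle $A\lambda+B\lambda^{3}$ present in every case, after which the coupling equations propagate consequences to the $Y$- and $M$-components.

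The main obstacle is the case analysis, and I expect the trichotomy to emerge exactly from which coefficients in the system degenerate. In the generic regime the scalars $a-1$, $2a-3$ attached to $L\circ Y$ and $L\circ M$ and the bracket scalar $b$ remain in general position, so the cubic contributions on $[L_\lambda Y]$ and $[Y_\lambda Y]$ are forced to vanish and one is left only with the single constraint $aC=bD$. At $(a,b)=(2,0)$ one has $a-1=1$ and $2a-3=1$, so $L\circ Y=Y$ and $L\circ M=M$ and the fields $Y,M$ acquire the same $\mathbb{C}[\partial]L$-action as $L$; the surviving cocycles then become Virasoro-type (supported in odd powers of $\lambda$), a cubic $\lambda^{3}$-term reappears on $[L_\lambda Y]$, and the $M$-cocycles on $[Y_\lambda Y]$ and $[L_\lambda M]$ are forced to coincide. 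At $(a,b)=(1,0)$ the product $L\circ Y=(a-1)Y$ degenerates to $0$, which decouples $Y$ from $L$ at order zero: this both opens up the quadratic term $D\lambda^{2}$ on $[L_\lambda Y]$ and permits the genuinely new constant cocycle $\alpha_\lambda(Y,M)=F$ on $[Y_\lambda M]$. The delicate part is tracking which determinant in the linear system vanishes in each regime and then verifying, by substituting the candidate families back into the cocycle condition (\ref{ec1}), that the three displayed lists are precisely the full solution sets.
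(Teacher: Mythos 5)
Your proposal follows essentially the same route as the paper: it invokes Proposition \ref{rr1} for the Gel'fand--Dorfman bialgebra $V(a,b)$ of Remark \ref{r1}, uses the fact that $L=L\circ L$, $Y=Y\circ L$, $M=M\circ L$ together with part (1) of that proposition to cap the degree of $\alpha_\lambda$ at $3$, and then reduces the classification to solving the linear system (\ref{eqqq1})--(\ref{eqq7}) in the scalars $\alpha_i(x,y)$, with the trichotomy at $(a,b)=(2,0)$ and $(a,b)=(1,0)$ emerging from the degenerations of $a-1$, $2a-3$ and $b$ exactly as in the paper's computation. Your qualitative account of the three regimes (including the constraint $aC=bD$, the reappearance of cubic cocycles at $(2,0)$, and the constant $Y$--$M$ cocycle at $(1,0)$) matches the paper's stated solution sets, so the plan is sound and coincides with the published argument.
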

\begin{proof}
By Remark \ref{r1}, $TSV(a,b)$ is a quadratic Lie conformal algebra corresponding to the Gel'fand-Dorfman bialgebra $V(a,b)$.
Since $L=L\circ L$, $Y=Y\circ L$ and $M=M\circ L$, we obtain that for any $x\in V(a,b)$, there exist $y$, $z\in V(a,b)$ such that
$x=y\circ z$. Therefore, by Proposition \ref{rr1}, we only need to determine all $\alpha_0(x,y)$, $\alpha_1(x,y)$, $\alpha_2(x,y)$ and $\alpha_3(x,y)$
satisfying (\ref{eqqq1})-(\ref{eqq7}) where $x$, $y\in \{L, M, Y\}$. By some computations, we can obtain\\
(1) If $a\neq 1$ and $a\neq 2$ or $b\neq 0$, we obtain (the other bilinear forms vanishing)
\begin{eqnarray*}
&&\alpha_3(L,L)=B,~~\alpha_1(L,L)=A,~~\alpha_1(L,Y)=D,~~\alpha_0(L,Y)=C,~~\alpha_1(Y,Y)=E,\\
&&\alpha_1(L,M)=(a-1)E,~~\alpha_0(L,M)=bE,
\end{eqnarray*}
for any $A$, $B$, $C$, $D$, $E\in \mathbb{C}$ and $aC=bD$;\\
(2) If $a=2$, $b=0$, we obtain (the other bilinear forms vanishing)
\begin{eqnarray*}
&&\alpha_3(L,L)=B,~~\alpha_1(L,L)=A,~~\alpha_3(L,Y)=D,~~\alpha_1(L,Y)=C,\\
&&\alpha_3(Y,Y)=\alpha_3(L,M)=F,~~\alpha_1(Y,Y)=\alpha_1(L,M)=E,
\end{eqnarray*}
for any $A$, $B$, $C$, $D$, $E$, $F\in \mathbb{C}$.\\
(3)If $a=1$, $b=0$, we obtain (the other bilinear forms vanishing)
\begin{eqnarray*}
&&\alpha_3(L,L)=B,~~\alpha_1(L,L)=A,~~\alpha_2(L,Y)=D,~~\alpha_1(L,Y)=C,\\
&&\alpha_1(Y,Y)=E,~~\alpha_0(Y,M)=F,
\end{eqnarray*}
for any $A$, $B$, $C$, $D$, $E$, $F\in \mathbb{C}$.\\

Therefore, by Proposition \ref{rr1}, this theorem holds.
\end{proof}
\end{remark}
\begin{corollary}\label{co}
(1) If $a\neq 1$, $a\neq 2$ and $a$, $b$ are not equal to zero simultaneously or $b\neq 0$, there exists a unique non-trivial central extension of $TSV(a,b)$ by a one-dimensional center $\mathbb{C} \mathfrak{c}$ with the following non-trivial $\lambda$-brackets:
\begin{eqnarray}
&&[L_\lambda L]=(\partial+2\lambda)L+\frac{1}{12}\lambda^3\mathfrak{c},\\
&&[L_\lambda Y]=(\partial+a\lambda+b)Y,\\
&&[Y_\lambda Y]=(\partial+2\lambda)M,~~[L_\lambda M]=(\partial+2(a-1)\lambda+2b)M.
\end{eqnarray}

(2) If $a=b=0$, for any $C$, $D\in \mathbb{C}$ and $(C,D)\neq (0,0)$, there exists a unique non-trivial central extension of $TSV(0,0)$ by a one-dimensional center $\mathbb{C} \mathfrak{c}$ with the following non-trivial $\lambda$-brackets:
\begin{eqnarray}
&&[L_\lambda L]=(\partial+2\lambda)L+\frac{1}{12}\lambda^3\mathfrak{c},\\
&&[L_\lambda Y]=\partial Y+(C+D\lambda)\mathfrak{c},\\
&&[Y_\lambda Y]=(\partial+2\lambda)M,~~[L_\lambda M]=(\partial-2\lambda)M.
\end{eqnarray}

(3) If $a=2$ and $b=0$, for any $D$, $F \in \mathbb{C}$ and $(D,F)\neq (0,0)$, there exists a unique non-trivial central extension of $TSV(2,0)$ by a one-dimensional center $\mathbb{C} \mathfrak{c}$ with the following non-trivial $\lambda$-brackets:
\begin{eqnarray}
&&[L_\lambda L]=(\partial+2\lambda)L+\frac{1}{12}\lambda^3\mathfrak{c},\\
&&[L_\lambda Y]=(\partial+2\lambda)Y+D\lambda^3\mathfrak{c},\\
&&[Y_\lambda Y]=(\partial+2\lambda)M+F\lambda^3 \mathfrak{c},\\
&&[L_\lambda M]=(\partial+2\lambda)M+F\lambda^3\mathfrak{c}.
\end{eqnarray}

(4) If $a=1$ and $b=0$, for any $D$, $F\in \mathbb{C}$, and $(D,F)\neq (0,0)$, there exists a unique non-trivial central extension of $TSV(1,0)$ by a one-dimensional center $\mathbb{C} \mathfrak{c}$ with the following non-trivial $\lambda$-brackets:
\begin{eqnarray}
&&[L_\lambda L]=(\partial+2\lambda)L+\frac{1}{12}\lambda^3\mathfrak{c},\\
&&[L_\lambda Y]=(\partial+\lambda)Y+D\lambda^2\mathfrak{c},~~[L_\lambda M]=\partial M\\
&&[Y_\lambda Y]=(\partial+2\lambda)M,~~[Y_\lambda M]=F\mathfrak{c}.
\end{eqnarray}
\end{corollary}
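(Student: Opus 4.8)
The plan is to read Corollary \ref{co} as the cohomological refinement of Theorem \ref{th2}. Theorem \ref{th2} already records, in each of the four cases, the \emph{full} space of $2$-cocycles $\alpha_\lambda$, so the only remaining task is to pass from cocycles to equivalence classes, i.e.\ to quotient the cocycle space by the subspace of coboundaries and then exhibit a canonical nonzero representative of each surviving class. The tool I would use is that two central extensions of $R=TSV(a,b)$ by $\mathbb{C}\mathfrak{c}$ are equivalent exactly when their cocycles differ by a coboundary, and a coboundary arises by re-choosing the $\mathbb{C}[\partial]$-linear section $x\mapsto x+\varphi(x)\mathfrak{c}$. Since $\partial\mathfrak{c}=0$, this forces $\varphi$ to vanish on $\partial R$, so $\varphi$ is determined by the three scalars $\varphi(L),\varphi(Y),\varphi(M)$, and the associated coboundary is $\alpha_\lambda(x,y)=\varphi([x_\lambda y]_R)$.

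First I would compute this coboundary subspace explicitly by applying $\varphi$ to each defining $\lambda$-bracket of $TSV(a,b)$. Writing $\varphi(L)=p$, $\varphi(Y)=q$, $\varphi(M)=r$ and using $\varphi(\partial\,\cdot)=0$, one obtains coboundary contributions $2\lambda p$ to $\alpha_\lambda(L,L)$, $bq+a\lambda q$ to $\alpha_\lambda(L,Y)$, $2\lambda r$ to $\alpha_\lambda(Y,Y)$, and $2br+2(a-1)\lambda r$ to $\alpha_\lambda(L,M)$. Subtracting these from the general cocycle of Theorem \ref{th2}(1), the parameter $A$ is absorbed by a choice of $p$, the pair $(C,D)$ by a choice of $q$, and $E$ together with the two $M$-terms by a single choice of $r$; the one quantity no coboundary can touch is the coefficient $B$ of $\lambda^3$ in $\alpha_\lambda(L,L)$, because $[x_\lambda y]_R$ is at most linear in $\lambda$ and hence so is every coboundary. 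This identifies the nontrivial classes with $\mathbb{C}B$, and rescaling to the standard Virasoro normalization $\tfrac{1}{12}\lambda^3$ yields part (1).

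The remaining parts follow the same normalization procedure, the differences being entirely due to degeneracies of the coboundary directions and to the extra cocycle parameters that Theorem \ref{th2} produces in the special cases. The step I expect to require the most care is the interaction between the coboundary direction on $\alpha_\lambda(L,Y)$, namely $q(b,a)$, and the constraint $aC=bD$ of Theorem \ref{th2}(1): when $(a,b)\neq(0,0)$ the solution locus of $aC=bD$ is exactly the line $\mathbb{C}(b,a)$, so $(C,D)$ is always a coboundary and disappears, whereas when $a=b=0$ the coboundary direction collapses to $0$ while the constraint becomes vacuous, so the entire plane of pairs $(C,D)$ survives \textemdash\ this is precisely what forces the separate, richer statement in part (2). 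For parts (3) and (4) one checks analogously that the extra cubic term $D\lambda^3$ (respectively the quadratic term $D\lambda^2$) on $\alpha_\lambda(L,Y)$, and the cocycle $F$ on $\alpha_\lambda(Y,Y)$ or on $\alpha_\lambda(Y,M)$, all lie outside the degree range of the coboundary map, so together with the ever-present Virasoro class $B$ they span a three-dimensional space of classes; fixing $B$ at $\tfrac{1}{12}$ and letting $(D,F)$ range over $(D,F)\neq(0,0)$ then produces the asserted families. Throughout, the only genuine verification is that each claimed representative is not a coboundary, which reduces to the degree-in-$\lambda$ bookkeeping indicated above.
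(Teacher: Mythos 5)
Your proposal is correct and follows essentially the same route as the paper: the paper's proof normalizes the cocycles of Theorem \ref{th2} by replacing $L$, $Y$, $M$ with $L+\tfrac{A}{2}\mathfrak{c}$, $Y+\tfrac{C}{b}\mathfrak{c}$ (or $Y+\tfrac{D}{a}\mathfrak{c}$), $M+\tfrac{E}{2}\mathfrak{c}$ and then rescaling $\mathfrak{c}$, which is exactly your section-change/coboundary argument $\alpha_\lambda(x,y)\mapsto\alpha_\lambda(x,y)-\varphi([x_\lambda y]_R)$ written generator by generator. Your added bookkeeping (the coboundary line $\mathbb{C}(b,a)$ versus the constraint $aC=bD$, and the degree-in-$\lambda$ argument for why $B$, $D$, $F$ survive) just makes explicit what the paper leaves implicit in its ``we can not make $C$ and $D$ vanishing'' and ``(3) and (4) can be similarly obtained.''
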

\begin{proof}
(1) If $b \neq 0$,  by Theorem \ref{th2} (1), replacing $L$, $Y$, $M$ respectively by $L+\frac{A}{2}\mathfrak{c}$, $Y+\frac{C}{b}\mathfrak{c}$,
$M+\frac{E}{2}\mathfrak{c}$, we can let $A=C=D=E=0$. If $b=0$ and $a\neq 0$, $a\neq 1$, $a\neq 2$, similarly, replacing $L$, $Y$, $M$ respectively by $L+\frac{A}{2}\mathfrak{c}$, $Y+\frac{D}{a}\mathfrak{c}$,
$M+\frac{E}{2}\mathfrak{c}$, we can also let $A=C=D=E=0$. Since the central extension is non-trivial, by rescaling $\mathfrak{c}$, we can suppose $B=\frac{1}{12}$. Then, we obtain the result.\\

(2) By  Theorem \ref{th2} (1), replacing $L$,  $M$ respectively by $L+\frac{A}{2}\mathfrak{c}$,
$M+\frac{E}{2}\mathfrak{c}$, we can make $A=E=0$. But, we can not make $C$ and $D$ vanishing. Thus, this result is obtained.\\

(3) and (4) can be similarly obtained.

\end{proof}

\begin{remark}
Define $\alpha$, $\beta$, $\gamma$, $\theta$, $\iota$, $\kappa$, and $\omega$: $TSV(a,b)\otimes TSV(a,b)\rightarrow \mathbb{C}[\lambda]$ by (all other terms are vanishing)
\begin{eqnarray}
\alpha_\lambda(L,L)=\lambda^3,~~\beta_\lambda(L,Y)=1,~~\gamma_\lambda(L,Y)=\lambda,\\
\theta_\lambda(L,Y)=\lambda^2,~~\iota_\lambda(L,Y)=\lambda^3,\\
\kappa_\lambda(Y,Y)=\kappa_\lambda(L,M)=\lambda^3,~~\omega_\lambda(Y,M)=1.
\end{eqnarray}

Then, by Corollary \ref{co} and the cohomology theory of Lie conformal algebra introduced in \cite{BKV} and the above discussion, we obtain
\begin{enumerate}
               \item If $a\neq 1$, $a\neq 2$ and $a$, $b$ are non equal to zero simultaneously or $b\neq 0$,
 $H^2(TSV(a,b),\mathbb{C})=\mathbb{C}\alpha$.\\
               \item $H^2(TSV(0,0),\mathbb{C})=\mathbb{C}\alpha\oplus \mathbb{C}\beta\oplus\mathbb{C}\gamma$.\\
               \item $H^2(TSV(2,0),\mathbb{C})=\mathbb{C}\alpha\oplus \mathbb{C}\iota\oplus\mathbb{C}\kappa$.\\
               \item $H^2(TSV(1,0),\mathbb{C})=\mathbb{C}\alpha\oplus \mathbb{C}\theta\oplus \mathbb{C}\omega$ .
             \end{enumerate}
\end{remark}

\begin{theorem}\label{th1}
Any central extension of $TSV(c)$ by a one-dimensional center $\mathbb{C}\mathfrak{c}$ is of the following form: $\widetilde{TSV(c)}=TSV(c)\oplus \mathbb{C}\mathfrak{c}$ and the non-trivial $\lambda$-brackets are given as follows:
\begin{eqnarray}
&&[L_\lambda L]=(\partial+2\lambda)L+(a_1\lambda+a_3\lambda^3)\mathfrak{c},\\
&&[L_\lambda Y]=(\partial+\frac{3}{2}\lambda+c)Y+(\frac{2}{3}c+\lambda)b_1\mathfrak{c},\\
&&[Y_\lambda Y]=-(\partial+2\lambda)(\partial+2c)M-2c_0\lambda\mathfrak{c},\\
&&[L_\lambda M]=(\partial+2c)M+c_0\mathfrak{c},
\end{eqnarray}
where $a_1$, $a_3$, $b_1$, $c_0\in \mathbb{C}$.
\end{theorem}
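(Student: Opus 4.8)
The plan is to work directly from the cocycle conditions (\ref{ef3})--(\ref{ec1}), since $TSV(c)$ is \emph{not} a quadratic Lie conformal algebra — its bracket $[Y_\lambda Y]=-(\partial+2\lambda)(\partial+2c)M$ is quadratic in $\partial$ — so Proposition \ref{rr1} does not apply here. Writing $[a_\lambda b]_{\widehat{R}}=[a_\lambda b]+\alpha_\lambda(a,b)\mathfrak{c}$, the sesquilinearity relations (\ref{ef3}) give $\alpha_\lambda(\partial^m x,\partial^n y)=(-\lambda)^m\lambda^n\alpha_\lambda(x,y)$, so $\alpha_\lambda$ is completely determined by the six polynomials
\[
f(\lambda)=\alpha_\lambda(L,L),\ g(\lambda)=\alpha_\lambda(L,Y),\ h(\lambda)=\alpha_\lambda(L,M),\ k(\lambda)=\alpha_\lambda(Y,Y),\ p(\lambda)=\alpha_\lambda(Y,M),\ q(\lambda)=\alpha_\lambda(M,M).
\]
Skew-symmetry (\ref{ef4}) forces $f,k,q$ to be odd and fixes the transposed values $\alpha_\lambda(Y,L),\alpha_\lambda(M,L),\alpha_\lambda(M,Y)$ in terms of $g,h,p$. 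Thus the whole problem reduces to finding all tuples $(f,g,h,k,p,q)$ compatible with (\ref{ec1}).

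Next I would feed each triple of generators into (\ref{ec1}). The triple $(L,L,L)$ reproduces the classical Virasoro functional equation $(\lambda+2\mu)f(\lambda)-(2\lambda+\mu)f(\mu)=(\lambda-\mu)f(\lambda+\mu)$, whose odd polynomial solutions are exactly $f=a_1\lambda+a_3\lambda^3$. The triple $(L,L,Y)$ yields $(\lambda+\tfrac32\mu+c)g(\lambda)-(\mu+\tfrac32\lambda+c)g(\mu)=(\lambda-\mu)g(\lambda+\mu)$; a degree comparison shows $\deg g\le 1$ and matching both sides forces $g(\lambda)=b_1(\lambda+\tfrac23 c)$. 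The triples $(L,Y,M)$ and $(L,M,M)$ each produce an identity of the shape (polynomial in $\mu$) $=$ (linear in $\lambda$)$\cdot r(\lambda+\mu)$ whose left-hand side is independent of $\lambda$; a degree count in $\lambda$ immediately yields $p=0$ and $q=0$.

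The crux is the coupled pair $h=\alpha_\lambda(L,M)$ and $k=\alpha_\lambda(Y,Y)$, linked because the quadratic bracket $[Y_\lambda Y]$ outputs $M$. The triple $(L,L,M)$ gives $(\lambda+2c)h(\lambda)-(\mu+2c)h(\mu)=(\lambda-\mu)h(\lambda+\mu)$, while the key triple $(L,Y,Y)$ — the one where the second-order bracket enters, attaching a weight $\lambda^2$ to $h$ through $\alpha_\lambda(L,\partial^2M)=\lambda^2 h(\lambda)$ — produces a relation between $h$ and $k$. Specializing that relation at $\mu=0$ (using $k(0)=0$) collapses it to $k(\lambda)=-2\lambda h(\lambda)$; since $k$ is odd, this forces $h$ to be even, and combined with the $(L,L,M)$ equation it pins down $h(\lambda)=c_0$ and hence $k(\lambda)=-2c_0\lambda$. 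Finally I would check that the remaining triples, e.g. $(Y,Y,L)$, $(Y,Y,Y)$, $(L,Y,L)$, impose no new conditions, so that $(f,g,h,k,p,q)$ is exactly the asserted list.

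I expect the main obstacle to be the degenerate case $c=0$. When $c\neq 0$, setting $\mu=0$ in the $(L,L,M)$ equation already forces $2c\,(h(\lambda)-h(0))=0$, so $h$ is constant outright; but when $c=0$ that equation only restricts $h$ to $c_0+e\lambda$, and the spurious linear term is eliminated solely by invoking the coupling $k=-2\lambda h$ together with the oddness of $k$. Keeping the analysis uniform across $c$, and correctly tracking how $[Y_\lambda Y]$ contributes the factor $\lambda^2+(2c+2\mu)\lambda+4c\mu$ to $\alpha_\lambda(L,[Y_\mu Y])$, is where the bookkeeping must be done with care; everything else is routine coefficient comparison.
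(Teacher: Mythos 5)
Your proposal is correct and takes essentially the same route as the paper: both work directly from the cocycle conditions (\ref{ef3})--(\ref{ec1}) (the paper likewise does not invoke Proposition \ref{rr1} here, precisely because $[Y_\lambda Y]$ is quadratic in $\partial$), reduce to the six polynomials, and extract the same equations from the same triples $(L,L,L)$, $(L,L,Y)$, $(L,L,M)$, $(L,Y,Y)$, $(L,Y,M)$, $(L,M,M)$. The only difference is cosmetic: your $\mu=0$ specialization with $k(0)=0$ yields $k(\lambda)=-2\lambda h(\lambda)$ uniformly in $c$, whereas the paper compares coefficients of general polynomial ansätze and treats $c=0$ and $c\neq 0$ separately.
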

\begin{proof}
By (\ref{ef3}), (\ref{ef4}) and (\ref{ec1}), we only need to determine $\alpha_{\lambda}(L,L)$, $\alpha_{\lambda}(L,Y)$, $\alpha_{\lambda}(L,M)$,
$\alpha_{\lambda}(Y,Y)$, $\alpha_{\lambda}(Y,M)$,  $\alpha_{\lambda}(M,M)$.

Replacing $a$, $b$, $c$ by $L$ in (\ref{ec1}), we obtain
\begin{eqnarray}\label{eeq1}
(\lambda+2\mu)\alpha_{\lambda}(L,L)-(\mu+2\lambda)\alpha_{\mu}(L,L)=(\lambda-\mu)\alpha_{\lambda+\mu}(L,L).
\end{eqnarray}
Set $\alpha_\lambda(L,L)=\sum_{i=0}^na_i\lambda^i\in \mathbb{C}[\lambda]$ with $a_n\neq 0$. If $n>1$, by comparing the coefficients of $\lambda^n$,
one gets $(n-3)a_n\mu=0$. Thus, $n=3$. Now, we have $\alpha_{\lambda}(L,L)=a_0+a_1\lambda+a_2\lambda^2+a_3\lambda^3$. By (\ref{ef4}) and (\ref{eeq1}),
we can obtain $\alpha_{\lambda}(L,L)=a_1\lambda+a_3\lambda^3$.

Taking $a=b=L$, $c=Y$ in (\ref{ec1}) and by (\ref{ef3}) and (\ref{ef4}), one has
\begin{gather}\label{eeq2}
(\lambda+\frac{3}{2}\mu+c)\alpha_\lambda(L,Y)-(\mu+\frac{3}{2}\lambda+c)\alpha_\mu(L,Y)\\
=(\lambda-\mu)\alpha_{\lambda+\mu}(L,Y).\nonumber
\end{gather}
Assume that $\alpha_{\lambda}(L,Y)=\sum_{i=0}^mb_i\lambda^i\in \mathbb{C}[\lambda]$ with $b_m\neq 0$.
If $m>1$, by comparing the coefficients of $\lambda^m$ in (\ref{eeq2}), one has $((\frac{5}{2}-m)\mu+c)b_m=0$, which gives $b_m=0$.
Thus, $\alpha_{\lambda}(L,Y)=b_0+b_1\lambda$. Inserting this into (\ref{eeq2}) leads to $b_0=\frac{2}{3}cb_1$ and thus
$\alpha_{\lambda}(L,Y)=b_1(\frac{2}{3}c+\lambda)$.

Letting $a=L$, $b=L$ and $c=M$ in (\ref{ec1}), we get
\begin{eqnarray}\label{eeq3}
(\lambda+2c)\alpha_{\lambda}(L,M)-(\mu+2c)\alpha_{\mu}(L,M)=(\lambda-\mu)\alpha_{\lambda+\mu}(L,M).
\end{eqnarray}
Set $\alpha_{\lambda}(L,M)=\sum_{i=0}^sc_i\lambda^i\in \mathbb{C}[\lambda]$ with $c_s\neq 0$.
If $s>1$, comparing the coefficients of $\lambda^s$, we obtain $2cc_s-c_s(s-1)\mu=0$. Thus, $s\leq 1$.
Then, plugging $\alpha_{\lambda}(L,M)=c_0+c_1\lambda$ into (\ref{eeq3}), we get $cc_1=0$. Thus,
when $c= 0$, $\alpha_{\lambda}(L,M)=c_0+c_1\lambda$. Otherwise, $\alpha_{\lambda}(L,M)=c_0$.

Furthermore, taking $a=L$, $b=c=Y$ in (\ref{ec1}), we obtain
\begin{gather}\label{eeq4}
-(\lambda+2\mu)(\lambda+2c)\alpha_{\lambda}(L,M)-(\mu+\frac{3}{2}\lambda+c)\alpha_{\mu}(Y,Y)\\
=(\frac{1}{2}\lambda-\mu+c)\alpha_{\lambda+\mu}(Y,Y).\nonumber
\end{gather}
Then, we discuss it in two cases, i.e. $c=0$ and $c\neq 0$. If $c=0$, we have known $\alpha_{\lambda}(L,M)=c_0+c_1\lambda$.
Assume that $\alpha_{\lambda}(Y,Y)=\sum_{i=0}^td_i\lambda^i\in \mathbb{C}[\lambda]$ with $d_t\neq 0$. Then,
(\ref{eeq4}) becomes
\begin{gather}\label{eeq5}
-(\lambda+2\mu)\lambda(c_0+c_1\lambda)-(\mu+\frac{3}{2}\lambda)\sum_{i=0}^td_i\mu^i\\
=(\frac{1}{2}\lambda-\mu)\sum_{i=0}^td_i(\lambda+\mu)^i.\nonumber
\end{gather}
Obviously, by (\ref{eeq5}), we get $t\leq 2$. Then, taking $\alpha_{\lambda}(Y,Y)=d_0+d_1\lambda+d_2\lambda^2$ into (\ref{eeq5}) and by
some simple computations, one has $d_0=0$, $d_1=-2c_0$, $d_2=c_1=0$. Thus, $\alpha_{\lambda}(L,M)=c_0$, $\alpha_{\lambda}(Y,Y)=-2c_0\lambda$.
If $c\neq 0$, taking $\alpha_{\lambda}(Y,Y)=\sum_{i=0}^td_i\lambda^i\in \mathbb{C}[\lambda]$ with $d_t\neq 0$ into (\ref{eeq4}) and
with a similar discussion, we obtain $\alpha_{\lambda}(Y,Y)=-2c_0\lambda$.

Finally, replacing $a$, $b$, $c$ by $L$, $M$, $M$ and $L$, $Y$, $M$ respectively in (\ref{ec1}), we get
\begin{eqnarray*}
&&-(\mu+2c)\alpha_{\mu}(M,M)=(-\lambda-\mu+2c)\alpha_{\lambda+\mu}(M,M),\\
&&-(\mu+2c)\alpha_{\mu}(Y,M)=(\frac{1}{2}\lambda-\mu+c)\alpha_{\lambda+\mu}(Y,M).
\end{eqnarray*}
It follows that $\alpha_{\lambda}(M,M)=\alpha_{\lambda}(Y,M)=0$. Then, it is easy to see that (\ref{ec1}) holds in other cases.

By now, the proof is finished.

\end{proof}
\begin{corollary}\label{co1}
If $c\neq 0$, there exists a unique non-trivial central extension of $TSV(c)$ by a one-dimensional center $\mathbb{C} \mathfrak{c}$ with the following non-trivial $\lambda$-brackets:
\begin{eqnarray}
&&[L_\lambda L]=(\partial+2\lambda)L+\frac{1}{12}\lambda^3\mathfrak{c},\\
&&[L_\lambda Y]=(\partial+\frac{3}{2}\lambda+c)Y,~~[L_\lambda M]=(\partial+2c)M,\\
&&[Y_\lambda Y]=-(\partial+2\lambda)(\partial+2c)M.
\end{eqnarray}

If $c=0$, for any $c_0\in \mathbb{C}\backslash\{0\}$, there exists a unique non-trivial central extension of $TSV(0)$ by a one-dimensional center $\mathbb{C} \mathfrak{c}$ with the following non-trivial $\lambda$-brackets:
\begin{eqnarray}
&&[L_\lambda L]=(\partial+2\lambda)L+\frac{1}{12}\lambda^3\mathfrak{c},\\
&&[L_\lambda Y]=(\partial+\frac{3}{2}\lambda)Y,~~[L_\lambda M]=\partial M+c_0 \mathfrak{c},\\
&&[Y_\lambda Y]=-(\partial+2\lambda)\partial M-2c_0\lambda \mathfrak{c}.
\end{eqnarray}
\end{corollary}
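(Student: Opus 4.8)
The plan is to derive Corollary~\ref{co1} directly from the classification in Theorem~\ref{th1} by reducing the general cocycle to a normal form and then quotienting out coboundaries, in the same spirit as the proof of Corollary~\ref{co}. By Theorem~\ref{th1}, every central extension $\widetilde{TSV(c)}$ is governed by the four scalars $a_1$, $a_3$, $b_1$, $c_0$. A coboundary (equivalence of extensions) is produced by replacing the generators $L$, $Y$, $M$ by $L+p\mathfrak{c}$, $Y+q\mathfrak{c}$, $M+r\mathfrak{c}$ for constants $p,q,r\in\mathbb{C}$; since $\partial\mathfrak{c}=0$, such a shift only modifies the $\mathfrak{c}$-components of the brackets and keeps the underlying $TSV(c)$-structure intact. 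The first step is therefore to record how each of the four brackets in Theorem~\ref{th1} transforms under this substitution.

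First I would handle $[L_\lambda L]$ and $[L_\lambda Y]$. Using $(\partial+2\lambda)\mathfrak{c}=2\lambda\mathfrak{c}$, one finds that the substitution sends $a_1\mapsto a_1-2p$ while leaving $a_3\lambda^3$ untouched; hence choosing $p=a_1/2$ eliminates $a_1$, and $a_3$ is a genuine (non-removable) invariant, the usual Virasoro central charge. Rewriting $[L_\lambda Y]$ in terms of the shifted $Y$ produces a correction $-q(\tfrac32\lambda+c)\mathfrak{c}$, so the cocycle $b_1(\tfrac23 c+\lambda)$ becomes $c(\tfrac23 b_1-q)+(b_1-\tfrac32 q)\lambda$; the choice $q=\tfrac23 b_1$ annihilates both coefficients at once for every value of $c$, so $b_1$ is always a coboundary.

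The decisive step is the analysis of the $M$-shift, which simultaneously controls $[L_\lambda M]$ and $[Y_\lambda Y]$. Writing $M=M'-r\mathfrak{c}$ and using $(\partial+2c)\mathfrak{c}=2c\mathfrak{c}$ together with $(\partial+2\lambda)(\partial+2c)\mathfrak{c}=4c\lambda\mathfrak{c}$, the constant $c_0$ in $[L_\lambda M]$ shifts to $c_0-2cr$ and the coefficient $-2c_0$ of $\lambda\mathfrak{c}$ in $[Y_\lambda Y]$ shifts to $-2c_0+4cr$. When $c\neq0$ the single choice $r=c_0/(2c)$ kills both terms at once, leaving only $a_3\lambda^3$; a non-trivial extension then forces $a_3\neq0$, and rescaling $\mathfrak{c}$ normalizes $a_3=\tfrac1{12}$, giving the stated unique extension. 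When $c=0$, however, both corrections vanish identically (the factor $c$ appears in each), so $c_0$ cannot be removed and survives alongside $a_3$; after eliminating $a_1$, $b_1$ and rescaling to $a_3=\tfrac1{12}$, the extension is determined by the remaining modulus $c_0$, which yields the second family.

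The main obstacle I anticipate is purely the bookkeeping in this last step: one must verify that the lone parameter $r$ annihilates the cocycle contributions in two different brackets simultaneously when $c\neq0$, and, conversely, that it has no effect at all when $c=0$. Everything else is routine once the transformation rules for $p$, $q$, $r$ are written down, so I would take care to apply $\partial\mathfrak{c}=0$ consistently throughout, since that identity is precisely what produces the $c=0$ degeneration and thus the dichotomy between the two parts of the corollary.
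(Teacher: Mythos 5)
Your proposal is correct and follows essentially the same route as the paper: both reduce the cocycle of Theorem~\ref{th1} by the shifts $L\mapsto L+\tfrac{a_1}{2}\mathfrak{c}$, $Y\mapsto Y+\tfrac{2}{3}b_1\mathfrak{c}$, $M\mapsto M+\tfrac{c_0}{2c}\mathfrak{c}$ (the last only when $c\neq 0$), observe that the $M$-shift cancels the $\mathfrak{c}$-terms in $[L_\lambda M]$ and $[Y_\lambda Y]$ simultaneously but degenerates when $c=0$, and finally rescale $\mathfrak{c}$ to normalize $a_3=\tfrac{1}{12}$. Your explicit verification that the single parameter $r$ kills both brackets at once is exactly the computation the paper leaves implicit.
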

\begin{proof}
If $c\neq 0$, by Theorem \ref{th1}, replacing $L$, $Y$, $M$ respectively by $L+\frac{a_1}{2}\mathfrak{c}$, $Y+\frac{2}{3}b_1\mathfrak{c}$,
$M+\frac{c_0}{2c}\mathfrak{c}$, we can make $a_1=b_1=c_0=0$. Since the central extension is nontrivial, by rescaling $\mathfrak{c}$, we can suppose $a_3=\frac{1}{12}$. When $c=0$, by replacing  $L$, $Y$ respectively by $L+\frac{a_1}{2}\mathfrak{c}$, $Y+\frac{2}{3}b_1\mathfrak{c}$, we can let
$a_1=b_0=0$. But, we can not make $c_0$ vanishing. Thus, this result is obtained.
\end{proof}
\begin{remark}
Define $\alpha$ and $\beta: TSV(c)\otimes TSV(c)\rightarrow \mathbb{C}[\lambda]$ by (all other terms are vanishing)
\begin{eqnarray}
\alpha_\lambda(L,L)=\lambda^3,~~\beta_\lambda(L,M)=1,~~\beta_\lambda(Y,Y)=-2\lambda.
\end{eqnarray}

Then, by Corollary \ref{co1}, we get
\begin{enumerate}
               \item If $c\neq 0$, $H^2(TSV(c),\mathbb{C})=\mathbb{C}\alpha$.\\
               \item $H^2(TSV(0),\mathbb{C})=\mathbb{C}\alpha\oplus \mathbb{C}\beta$.\\
             \end{enumerate}
\end{remark}

\section{Conformal derivations and free conformal modules of rank one of $TSV(a,b)$ and $TSV(c)$}
In this section, we will study conformal derivations and free conformal modules of rank one of $TSV(a,b)$ and $TSV(c)$.

First, we investigate conformal derivations of $TSV(a,b)$ and $TSV(c)$.

From now on, denote by $\mathcal{A}$ the ring $\mathbb{C}[\partial]$ of polynomials in the indeterminate $\partial$.

\begin{definition}
A \emph{conformal linear map} between $\mathcal{A}$-modules $U$ and $V$ is a linear map
$\phi_{\lambda}: U\rightarrow \mathcal{A}[\lambda]\otimes_{\mathcal{A}}V$ such that
\begin{eqnarray}
\phi_\lambda(\partial u)=(\partial+\lambda)\phi_\lambda u, ~~\text{for~~all~~$u\in U$.}
\end{eqnarray}
\end{definition}
We will abuse the notation by writing $\phi: U\rightarrow V$ any time it is clear from the context
that $\phi$ is conformal linear.

\begin{definition}
Let $R$ be a Lie conformal algebra. A conformal linear map $d: R\rightarrow R$ is
called a \emph{conformal derivation} of $R$ if
\begin{eqnarray}\label{d1}
d_\lambda[a_\mu b]=[(d_\lambda a)_{\lambda+\mu} b]+[a_\mu(d_\lambda b)], ~~\text{all $a$, $b\in R$.}
\end{eqnarray}
\end{definition}

The space of all conformal derivations of $R$ is denoted by $\text{CDer}(R)$. For any $a\in R$, there is a natural conformal linear map $\text{ad}~a: R\rightarrow R$ such that $$(\text{ad} ~a)_\lambda b=[a_\lambda b],~~~b\in R.$$
All conformal derivations of this kind are called \emph{inner}. The space of all inner conformal derivations is denoted by
$\text{CInn}(R)$.

Next, we begin to determine
conformal derivations of $TSV(a,b)$ and $TSV(c)$.

\begin{theorem}
(1)If $a\neq \frac{3}{2}$, all conformal derivations of $TSV(a,b)$ are inner, i.e. $\text{CDer}(TSV(a,b))=\text{CInn}(TSV(a,b))$.

If $a=\frac{3}{2}$, for any $b_0\in \mathbb{C}\backslash\{0\}$, we can define a non-inner derivation $R$ whose actions are given by
$R_{\lambda}(L)=b_0M$, $R_{\lambda}(M)=R_{\lambda}(Y)=0$. Denote the vector space of all $R$ by $T$.
We get $\text{CDer}(TSV(\frac{3}{2},b))=\text{CInn}(TSV(\frac{3}{2},b))\oplus T$.

(2) All conformal derivations of $TSV(c)$ are inner, i.e. $CDer(TSV(c))=CInn (TSV(c))$.
\end{theorem}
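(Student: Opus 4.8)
The plan is to parametrize a general conformal derivation $d$ of $TSV(a,b)$ (and separately of $TSV(c)$) by its values on the three generators $L$, $Y$, $M$, writing
\begin{equation*}
d_\lambda(L)=f_1(\partial,\lambda)L+f_2(\partial,\lambda)Y+f_3(\partial,\lambda)M,
\end{equation*}
and similarly $d_\lambda(Y)$ and $d_\lambda(M)$ as $\mathbb{C}[\partial,\lambda]$-combinations of $L,Y,M$; here the coefficient polynomials are unknowns to be determined. The defining identity \eqref{d1} must then be imposed for each of the six ordered pairs $(a,b)$ drawn from $\{L,Y,M\}$ with $a\preceq b$ (the remaining pairs follow by skew-symmetry). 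Each choice, after expanding both sides using the $\lambda$-brackets from the definition of $TSV(a,b)$ and collecting the coefficients of $L$, $Y$, $M$, yields a system of polynomial equations in $\partial$, $\lambda$, $\mu$ that the unknown coefficient functions must satisfy. The first reduction I would carry out is to exploit the grading: since $[L_\lambda L]$ contains only $L$ and the modules $\mathbb{C}[\partial]Y$, $\mathbb{C}[\partial]M$ are preserved under the $L$-action, the pair $(L,L)$ alone forces $d$ restricted to the Virasoro part to be governed by the known derivation theory of $\mathrm{Vir}$, so I expect $d_\lambda(L)$ to have its $L$-component of the form $(\text{const})(\partial+2\lambda)L$ plus inner-type pieces, while its $Y$- and $M$-components are constrained by the remaining identities.

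Next I would peel off the inner derivations at the outset: for each $x\in\{L,Y,M\}$ and each $h(\partial)\in\mathbb{C}[\partial]$, the map $\mathrm{ad}(h(\partial)x)$ is an inner conformal derivation, and by subtracting a suitable inner derivation from $d$ I can normalize away as many of the unknown coefficients as possible. Concretely, using $\mathrm{ad}(h(\partial)L)$ I can arrange that $d_\lambda(L)$ has no $L$-component (or a single scalar multiple of $(\partial+2\lambda)L$, which is itself inner), and using $\mathrm{ad}$ of appropriate multiples of $Y$ and $M$ I can kill further components. After this normalization the task becomes: show that any remaining non-inner derivation must vanish unless a special value of the structure parameter occurs. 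The identities coming from the pairs $(L,Y)$ and $(L,M)$ are where the parameter $a$ enters through the weights $a$ and $2(a-1)$; comparing coefficients of monomials in $\lambda$ and $\mu$ will produce linear relations whose solvability depends on whether certain expressions like $a-\tfrac32$ vanish. This is exactly the mechanism that should single out $a=\tfrac32$ in part (1): when $a=\tfrac32$ the weight of $Y$ is $\tfrac32\lambda$ and of $M$ is $\lambda$, and the resonance permits the extra map $R_\lambda(L)=b_0M$, $R_\lambda(Y)=R_\lambda(M)=0$, which I would verify directly satisfies \eqref{d1} and is not inner (no element of $TSV(\tfrac32,b)$ has $\mathrm{ad}$ sending $L\mapsto b_0M$ and $Y,M\mapsto 0$).

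The main obstacle I anticipate is bookkeeping rather than conceptual: the identity \eqref{d1} for the pair $(Y,Y)$ involves $[Y_\mu Y]=(\partial+2\lambda)M$, so it mixes the $Y$-components of $d_\lambda(Y)$ with the $M$-components and couples them to the weight data, and getting the degree bounds on the coefficient polynomials right (so that one knows the $\lambda$-degree of each $f_i$ is bounded) requires careful comparison of top-degree coefficients in $\mu$, analogous to the arguments in the proof of Theorem~\ref{th1}. I would handle this by first establishing, for each coefficient function $f(\partial,\lambda)$, an a priori degree bound in $\lambda$ by examining the highest-degree term of the relevant identity (the same ``$(n-3)$'' or ``$(\tfrac52-m)$'' type vanishing argument used for central extensions), and only then solving the finitely many resulting linear equations. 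For $TSV(c)$ in part (2), the same scheme applies but the bracket $[Y_\lambda Y]=-(\partial+2\lambda)(\partial+2c)M$ has degree two in $\partial$, which changes the degree bookkeeping; I expect no resonance to survive there, so every solution reduces to an inner derivation, giving $\mathrm{CDer}(TSV(c))=\mathrm{CInn}(TSV(c))$. The only genuinely delicate point is to confirm that the candidate extra derivation in the $a=\tfrac32$ case is not inner and that it, together with the inner derivations, exhausts all of $\mathrm{CDer}$; both are finite linear-algebra checks once the normalization above is in place.
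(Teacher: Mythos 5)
Your proposal follows essentially the same route as the paper's proof: parametrize $d_\lambda$ on the generators $L$, $Y$, $M$ with unknown polynomial coefficients, extract degree bounds from the identity applied to $[L_\mu L]$ (yielding $A_1=f_1(\lambda)(\partial+2\lambda)$ and first-degree forms for the $Y$- and $M$-components subject to constraints such as $(a-\tfrac{3}{2})h_0(\lambda)=((a-1)\lambda-b)h_1(\lambda)$), normalize by subtracting inner derivations, and then use the $(L,Y)$ and $(L,M)$ identities to kill the remaining components, with the resonance at $a=\tfrac{3}{2}$ leaving exactly the non-inner derivation $R_\lambda(L)=b_0M$. The paper carries out precisely this plan (including the intermediate case $a=1$, where a residual $Y$-component appears but is then forced to vanish), so your approach is correct and matches the paper's argument.
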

\begin{proof}
(1) Assume that $d$ is a conformal derivation of $TSV(a,b)$. Suppose that
\begin{eqnarray}
&&d_\lambda(L)=A_1(\lambda,\partial)L+B_1(\lambda,\partial)Y+C_1(\lambda,\partial)M,\\
&&d_\lambda(Y)=A_2(\lambda,\partial)L+B_2(\lambda,\partial)Y+C_2(\lambda,\partial)M,\\
&&d_\lambda(M)=A_3(\lambda,\partial)L+B_3(\lambda,\partial)Y+C_3(\lambda,\partial)Y,
\end{eqnarray}
where $A_i(\lambda,\partial)$, $B_i(\lambda,\partial)$, $C_i(\lambda,\partial)\in \mathbb{C}[\lambda,\partial]$ for $i\in\{1,2,3\}$.

Applying $d_\lambda$ to $[L_\mu L]=(\partial+2\mu)L$, the left-hand side becomes
\begin{eqnarray*}
d_\lambda[L_\mu L]&=&[(d_\lambda L)_{\lambda+\mu} L]+[L_\mu (d_\lambda L)]\\
&=&(\partial+2\lambda+2\mu)A_1(\lambda,-\lambda-\mu)L+((a-1)\partial\\
&&+a(\lambda+\mu)-b)B_1(\lambda,-\lambda-\mu)Y+((2a-3)\partial\\
&&+2(a-1)(\lambda+\mu)-2b)C_1(\lambda,-\lambda-\mu)M\\
&&+(\partial+2\mu)A_1(\lambda,\mu+\partial)L+(\partial+a\mu+b)B_1(\lambda,\mu+\partial)Y\\
&&+(\partial+2(a-1)\mu+2b)C_1(\lambda,\mu+\partial)M,
\end{eqnarray*}
while the right side becomes
\begin{eqnarray*}
d_\lambda((\partial+2\mu)L)&=&d_\lambda((\partial+2\mu)L)\\
&=&(\partial+\lambda+2\mu)(A_1(\lambda,\partial)L+B_1(\lambda,\partial)Y+C_1(\lambda,\partial)M).
\end{eqnarray*}
By comparing the coefficients of the similar terms in the above equalities, we obtain
\begin{gather}
(\partial+\lambda+2\mu)A_1(\lambda,\partial)=(\partial+2\lambda+2\mu)A_1(\lambda,-\lambda-\mu)\nonumber\\
\label{w1}+(\partial+2\mu)A_1(\lambda,\mu+\partial),\\
(\partial+\lambda+2\mu)B_1(\lambda,\partial)=((a-1)\partial+a(\lambda+\mu)-b)B_1(\lambda,-\lambda-\mu)\nonumber\\
\label{w2}+(\partial+a\mu+b)B_1(\lambda,\mu+\partial),\\
(\partial+\lambda+2\mu)C_1(\lambda,\partial)=((2a-3)\partial+2(a-1)(\lambda+\mu)-2b)C_1(\lambda,-\lambda-\mu)\nonumber\\
\label{w3}+(\partial+2(a-1)\mu+2b)C_1(\lambda,\mu+\partial).
\end{gather}
Let $A_1(\lambda,\partial)=\sum_{i=0}^nf_i(\lambda)\partial^i$ where $f_i(\lambda)\in\mathbb{C}[\lambda]$ and $f_n(\lambda)\neq 0$.
If $n>1$, comparing the coefficients of $\partial^n$ in (\ref{w1}), we get $(\lambda-n\mu)f_n(\lambda)=0$, obtaining a contradiction.
Thus, $A_1(\lambda,\partial)=f_0(\lambda)+f_1(\lambda)\partial$. Taking this into (\ref{w1}) and by some computations, one can obtain
$f_0(\lambda)=2f_1(\lambda)\lambda$. Hence, $A_1(\lambda,\partial)=f_1(\lambda)(\partial+2\lambda)$. With a similar method, from (\ref{w2}) and
(\ref{w3}), we can obtain that
\begin{eqnarray*}
&&B_1(\lambda,\partial)=g_0(\lambda)+g_1(\lambda)\partial, \text{where $(a-1)g_0(\lambda)=(a\lambda-b)g_1(\lambda)$,}\\
&&C_1(\lambda,\partial)=h_0(\lambda)+h_1(\lambda)\partial, \text{where $(a-\frac{3}{2})h_0(\lambda)=((a-1)\lambda-b)h_1(\lambda)$,}
\end{eqnarray*}
where $g_0(\lambda)$, $g_1(\lambda)$, $h_0(\lambda)$, $h_1(\lambda)\in \mathbb{C}[\lambda]$.

From the above discussion, we can get:\\
If $a\neq 1$ and $a\neq \frac{3}{2}$, one has
\begin{gather}
d_\lambda(L)=f_1(\lambda)(\partial+2\lambda)L+g_1(\lambda)(\frac{a\lambda-b}{a-1}+\partial)Y\nonumber\\
+h_1(\lambda)(\frac{(a-1)\lambda-b}{a-\frac{3}{2}}+\partial)M.
\end{gather}
If $a=1$, one gets
\begin{gather}
d_\lambda(L)=f_1(\lambda)(\partial+2\lambda)L+g_0(\lambda)Y+h_1(\lambda)(2b+\partial)M.
\end{gather}
If $a=\frac{3}{2}$, one has
\begin{gather}
d_\lambda(L)=f_1(\lambda)(\partial+2\lambda)L+g_1(\lambda)((3\lambda-2b)+\partial) Y+h_0(\lambda)M.
\end{gather}

Next, we discuss it in the three cases.

If $a\neq 1$ and $a\neq \frac{3}{2}$, replacing $d$ by $d-{ad}(f_1(-\partial)L+\frac{g_1(-\partial)}{a-1}Y
+\frac{h_1(-\partial)}{2a-3}M)$, we get $d_\lambda(L)=0.$ Then, applying $d_\lambda$ to $[L_\mu Y]=(\partial+a\mu+b)Y$ and using
$d_\lambda(L)=0$, by comparing the coefficients of $L$, $Y$, $M$, one can obtain
\begin{eqnarray}
\label{w4}(\partial+\lambda+a\mu+b)A_2(\lambda,\partial)=(\partial+2\mu)A_2(\lambda,\mu+\partial),\\
\label{w5}(\partial+\lambda+a\mu+b)B_2(\lambda,\partial)=(\partial+a\mu+b)B_2(\lambda,\mu+\partial),\\
\label{w6}(\partial+\lambda+a\mu+b)C_2(\lambda,\partial)=(\partial+2(a-1)\mu+2b)C_2(\lambda,\mu+\partial).
\end{eqnarray}
Comparing the highest degrees of $\lambda$ in (\ref{w4}), (\ref{w5}) and (\ref{w6}), we can directly get
$A_2(\lambda,\partial)=B_2(\lambda,\partial)=C_2(\lambda,\partial)=0$. Thus, $d_\lambda(Y)=0$. Similarly,
applying $d_\lambda$ to $[L_\mu M]=(\partial+2(a-1)\mu+2b)M$, we can obtain  $d_\lambda(M)=0$. Therefore,
in this case, all conformal derivations are inner.

If $a=1$, assume that $g_0(\lambda)=\sum_{i=0}^na_i(\lambda-b)^i$ and let $\gamma(\lambda)=\sum_{i=0}^{n-1}a_{i+1}(\lambda-b)^i$. Then, replacing $d$ by $d-{ad}(f_1(-\partial)L+\gamma(-\partial)Y
-h_1(-\partial)M)$, we get $d_\lambda(L)=a_0Y.$ Then, similar to the first case, applying $d_\lambda$ to $[L_\mu Y]=(\partial+\mu+b)Y$ and using $d_\lambda(L)=a_0Y$, we obtain
\begin{eqnarray}
\label{w7}(\partial+\lambda+\mu+b)A_2(\lambda,\partial)=(\partial+2\mu)A_2(\lambda,\mu+\partial),\\
\label{w8}(\partial+\lambda+\mu+b)B_2(\lambda,\partial)=(\partial+\mu+b)B_2(\lambda,\mu+\partial),\\
\label{w9}(\partial+\lambda+\mu+b)C_2(\lambda,\partial)=a_0(\partial+2\lambda+2\mu)+(\partial+2b)C_2(\lambda,\mu+\partial).
\end{eqnarray}
Comparing the highest degrees of $\lambda$ in (\ref{w7}), (\ref{w8}) and (\ref{w9}), it is easy to see that
$A_2(\lambda,\partial)=B_2(\lambda,\partial)=C_2(\lambda,\partial)=a_0=0$. Thus, $d_\lambda(L)=d_\lambda(Y)=0$. Similarly,
applying $d_\lambda$ to $[L_\mu M]=(\partial+2b)M$, we can obtain  $d_\lambda(M)=0$. Therefore,
in this case, all conformal derivations are also inner.

If $a=\frac{3}{2}$, assume that $h_0(\lambda)=\sum_{i=0}^mb_i(\lambda-2b)^i$ and let $\kappa(\lambda)=\sum_{i=0}^{m-1}b_{i+1}(\lambda-2b)^i$. Then, replacing $d$ by $d-{ad}(f_1(-\partial)L+2g_1(-\partial)Y
+\kappa(-\partial)M)$, we get $d_\lambda(L)=b_0M.$ Then, with a similar discussion as that in the second case, we can get
$d_\lambda(Y)=d_\lambda(M)=0$. Therefore, in this case, $d={ad}(f_1(-\partial)L+2g_1(-\partial)Y
+\kappa(-\partial)M)+R$, where $R$ is defined by $R_\lambda(L)=b_0M,~~R_\lambda(Y)=R_\lambda(M)=0$.
Moreover, $R$ is a non-inner conformal derivation for any $b_0\in \mathbb{C}\backslash\{0\}$.

By now, the proof is finished.

(2)  Assume that $d$ is a conformal derivation of $TSV(c)$.
Let $d_\lambda(L)=E_1(\lambda,\partial)L+F_1(\lambda,\partial)Y+G_1(\lambda,\partial)M$. Similar to that in (1), by applying
$d_\lambda$ to $[L_\mu L]=(\partial+2\mu)L$, we can obtain
\begin{gather}
(\partial+\lambda+2\mu)E_1(\lambda,\partial)=(\partial+2\lambda+2\mu)E_1(\lambda,-\lambda-\mu)\nonumber\\
\label{w10}+(\partial+2\mu)E_1(\lambda,\mu+\partial),\\
(\partial+\lambda+2\mu)F_1(\lambda,\partial)=(\frac{1}{2}\partial+\frac{3}{2}(\lambda+\mu)-c)F_1(\lambda,-\lambda-\mu)\nonumber\\
\label{w11}+(\partial+\frac{3}{2}\mu+c)F_1(\lambda,\mu+\partial),\\
\label{w12}(\partial+\lambda+2\mu)G_1(\lambda,\partial)=(\partial+2c)(G_1(\lambda,\mu+\partial)-G_1(\lambda,-\lambda-\mu)).
\end{gather}
With the similar discussion that in (1), we can get
\begin{eqnarray}
&&E_1(\lambda,\partial)=f(\lambda)(\partial+2\lambda),~~~E_1(\lambda,\partial)=g(\lambda)(\partial+3\lambda-2c),\\
&&G_1(\lambda,\partial)=h(\lambda)(\partial+2c),
\end{eqnarray}
where $f(\lambda)$, $g(\lambda)$ and $h(\lambda)\in \mathbb{C}[\lambda]$.

Replacing $d$ by $d-ad(f(-\partial)L+2g(-\partial)Y-h(-\partial)M)$, we can get $d_\lambda (L)=0$.
Then, similar to that in (1), by applying $d_\lambda$ to $[L_\mu Y]=(\partial+\frac{3}{2}\mu+c)Y$ and $[L_\mu M]=(\partial+2c)M$ respectively and using
$d_\lambda(L)=0$, one can obtain $d_\lambda(Y)=d_\lambda(M)=0$. Therefore, all conformal derivations of $TSV(c)$ are inner.
\end{proof}

Finally, we present a characterization of all free non-trivial $TSV(a,b)$-modules and $TSV(c)$-modules of rank one.
\begin{theorem}
(1) If $a\neq 1$ or $b\neq 0$, all free non-trivial $TSV(a,b)$-modules of rank one over $\mathbb{C}[\partial]$ are as follows:
\begin{eqnarray*}
M_{\alpha,\beta}=\mathbb{C}[\partial]v,~~L_\lambda v=(\partial+\alpha\lambda+\beta)v,~~Y_\lambda v=M_\lambda v=0, \text{for~~some~~$\alpha$, $\beta\in \mathbb{C}$.}
\end{eqnarray*}

In addition, all free non-trivial $TSV(1,0)$-modules of rank one over $\mathbb{C}[\partial]$ are as follows:
\begin{eqnarray*}
M_{\alpha,\beta}=\mathbb{C}[\partial]v,~~L_\lambda v=(\partial+\alpha\lambda+\beta)v,~~Y_\lambda v=\gamma v,~~M_\lambda v=0, \text{for~~some~~$\alpha$, $\beta$, $\gamma\in \mathbb{C}$.}
\end{eqnarray*}

(2)All free non-trivial $TSV(c)$-modules of rank one over $\mathbb{C}[\partial]$ are as follows:
\begin{eqnarray*}
M_{\alpha,\beta}=\mathbb{C}[\partial]v,~~L_\lambda v=(\partial+\alpha\lambda+\beta)v,~~Y_\lambda v=M_\lambda v=0, \text{for~~some~~$\alpha$, $\beta\in \mathbb{C}$.}
\end{eqnarray*}

\end{theorem}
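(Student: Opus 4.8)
The plan is to reduce the whole classification to Proposition \ref{PP1} together with a short analysis of the polynomial functional equations coming from the remaining brackets. Write $M=\mathbb{C}[\partial]v$ and record the three actions as $L_\lambda v=f(\lambda,\partial)v$, $Y_\lambda v=g(\lambda,\partial)v$, $M_\lambda v=h(\lambda,\partial)v$ with $f,g,h\in\mathbb{C}[\lambda,\partial]$; since $a_\lambda(p(\partial)v)=p(\lambda+\partial)\,a_\lambda v$, each action is multiplication by a single polynomial, so $f,g,h$ carry all the data. Restricting to $\mathbb{C}[\partial]L\cong\text{Vir}$ makes $M$ a free rank-one Virasoro module. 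First I would rule out $f=0$: if $L$ acts trivially, then $L_\lambda(Y_\mu v)=Y_\mu(L_\lambda v)=0$, so the $[L_\lambda Y]$-identity collapses to $((a-1)\lambda-\mu+b)\,g(\lambda+\mu,\partial)=0$, forcing $g=0$, and likewise the $[L_\lambda M]$-identity forces $h=0$; hence $M$ is trivial. Thus on a non-trivial module $L$ acts non-trivially, and Proposition \ref{PP1} gives $f=\partial+\alpha\lambda+\beta$.

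The key idea for the rest is to lean on the \emph{abelian} relations rather than the three ``$L$-relations'' alone. From $[M_\lambda M]=0$ the module axiom reads $h(\mu,\lambda+\partial)h(\lambda,\partial)=h(\lambda,\mu+\partial)h(\mu,\partial)$; comparing the coefficient of $\partial^{2d-1}$, where $d=\deg_\partial h$, yields $d(\lambda-\mu)h_d(\lambda)h_d(\mu)=0$, which forces $d=0$, i.e.\ $h$ is independent of $\partial$. Substituting $h=h(\lambda)$ into the $[L_\lambda M]$-identity collapses its left side to $-\mu\,h(\mu)$, leaving $-\mu\,h(\mu)=((2a-3)\lambda-\mu+2b)\,h(\lambda+\mu)$; setting $\lambda=0$ gives $2b\,h=0$, and for $b=0$ a comparison of $\lambda$-degrees forces $h=0$, the only borderline being the weight $a=\tfrac32$, where $h$ could a priori be a nonzero constant.

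For the $Y$-action I would first extract the leading $\partial$-coefficient $g_n$ (with $n=\deg_\partial g$) of the $[L_\lambda Y]$-identity, obtaining $g_n(\mu)(n\lambda-\mu)=((a-1)\lambda-\mu+b)\,g_n(\lambda+\mu)$. Setting $\lambda=0$ gives $b\,g_n=0$, and the linear-in-$\lambda$ part gives $\mu\,g_n'(\mu)=(a-1-n)g_n(\mu)$; hence a nonzero $g$ forces $b=0$ and $a-1=n+\deg g_n\in\mathbb{Z}_{\geq0}$. This already disposes of $b\neq0$ and of every non-integer weight (in particular $a=\tfrac32$), giving $g=0$, while for $a=1$ it forces $n=0$ and $g$ a constant $\gamma$ (which one checks does solve the full identity). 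The remaining case $a-1\in\mathbb{Z}_{>0}$ is where the $L$-relations are insufficient: there I would use that $h=0$ (valid since such $a\neq\tfrac32$) to turn $[Y_\lambda Y]$ into the commuting relation $g(\mu,\lambda+\partial)g(\lambda,\partial)=g(\lambda,\mu+\partial)g(\mu,\partial)$, whose $\partial^{2n-1}$-coefficient $n(\lambda-\mu)g_n(\lambda)g_n(\mu)$ forces $n=0$; then the $n=0$ leading equation $-\mu\,g(\mu)=((a-1)\lambda-\mu)g(\lambda+\mu)$ with $a\neq1$ forces $g=0$ by a $\lambda$-degree count. This produces exactly the module $M_{\alpha,\beta}$ with $Y,M$ acting trivially, plus the extra family $Y_\lambda v=\gamma v$, $M_\lambda v=0$ when $a=1,b=0$.

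The hard part is precisely these spurious solutions and the borderline weights: the $[L_\lambda Y]$-equation by itself admits $\partial$-dependent solutions whenever $a-1$ is a nonnegative integer (for instance $g=c(\partial+\alpha\lambda+\beta)$ when $a=2$), and these survive all three $L$-relations; they are eliminated only by the commuting argument, which is why $[M_\lambda M]=0$ and $[Y_\lambda Y]$ are indispensable. At the leftover borderline $a=\tfrac32$ (where $h$ might be a nonzero constant) I would note that $g=0$ already holds, so $[Y_\lambda Y]$ reduces to $(\lambda-\mu)h(\lambda+\mu)=0$ and kills $h$ as well. The $TSV(c)$ case runs in parallel and is in fact cleaner: $[M_\lambda M]=0$ forces $h=h(\lambda)$, and the $[L_\lambda M]$-identity $-\mu\,h(\mu)=(-\lambda-\mu+2c)\,h(\lambda+\mu)$ forces $h=0$ for every $c$; then $[Y_\lambda Y]$, whose bracket is a nonzero multiple of $M$, together with the leading coefficient of the $[L_\lambda Y]$-identity (carrying the factor $\tfrac12\lambda-\mu+c$) forces $g=0$, leaving only $M_{\alpha,\beta}$.
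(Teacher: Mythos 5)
Your proof is correct, and it reaches the classification by a genuinely different route than the paper. The paper's pivot is the relation $[Y_\lambda M]=0$: from $h(\lambda,\mu+\partial)g(\mu,\partial)=g(\mu,\lambda+\partial)h(\lambda,\partial)$ it compares degrees in $\lambda$ and $\mu$ to conclude at once that $g$ and $h$ are independent of $\partial$; then $[Y_\lambda Y]$ kills $h$ (its left-hand side vanishes identically once $g=g(\lambda)$), and finally the $[L_\lambda Y]$ identity forces $g$ to be a constant which must vanish unless $a=1$, $b=0$. You instead obtain $\partial$-independence of $h$ from $[M_\lambda M]=0$, kill $h$ with $[L_\lambda M]$ (up to the borderline $a=\tfrac{3}{2}$, $b=0$), constrain $g$ via the leading $\partial$-coefficient of $[L_\lambda Y]$ (yielding $b\,g_n=0$ and $a-1=n+\deg g_n\in\mathbb{Z}_{\geq 0}$), and only then invoke $[Y_\lambda Y]$ as a commutation relation to exclude $\partial$-dependent $g$ at integer weights; the $TSV(c)$ case is run the same way in both treatments. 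Besides being correct, your ordering is tighter than the paper's: the paper's degree comparison in $[Y_\lambda M]=0$ is vacuous when $h=0$ (one cannot read off $\deg_\partial g=0$ from an identity both of whose sides are zero), and that is exactly where your spurious solution $g=c(\partial+\alpha\lambda+\beta)$, $h=0$ at $a=2$, $b=0$ lives: it satisfies $[Y_\lambda M]=0$ trivially, together with all three $L$-relations and $[M_\lambda M]=0$, and is excluded only by $[Y_\lambda Y]$. So your explicit identification of the $[Y_\lambda Y]$ commutation as indispensable closes a case the paper's argument passes over silently. The trade-off is length: when $g$ and $h$ are both nonzero, the paper's single degree comparison settles both polynomials simultaneously, whereas your case analysis ($b\neq 0$; $b=0$ with $a-1\notin\mathbb{Z}_{\geq 0}$; $a=1$; $a-1\in\mathbb{Z}_{>0}$; the borderline $a=\tfrac{3}{2}$) is longer but airtight.
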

\begin{proof}
(1) Assume that
\begin{eqnarray}\label{h5}
L_\lambda v=f(\lambda,\partial)v, ~~Y_\lambda v=g(\lambda,\partial)v,~~\text{and} ~~M_\lambda v=h(\lambda,\partial)v,
\end{eqnarray}
where
$f(\lambda,\partial)$, $g(\lambda,\partial)$, $h(\lambda,\partial)\in \mathbb{C}[\lambda,\partial]$.

Since $\mathbb{C}[\partial]L$ is the Virasoro Lie conformal algebra, according to Proposition \ref{PP1}, we get that $f(\lambda,\partial)=0$ or
$f(\lambda,\partial)=\partial+\alpha\lambda+\beta$ for some $\alpha$, $\beta\in \mathbb{C}$. Since $[Y_\lambda M]=0$, we have
$Y_\mu(M_\lambda v)=M_\lambda(Y_\mu v)$. Therefore, we can obtain $h(\lambda,\partial+\mu)g(\mu,\partial)=g(\mu,\partial+\lambda)h(\lambda,\partial)$, which implies $\text{deg}_\lambda h(\lambda,\partial)
=\text{deg}_\partial g(\lambda,\partial)+\text{deg}_\lambda h(\lambda,\partial)$, where $\text{deg}_\lambda h(\lambda,\partial)$ is
the highest degree of $\lambda$ in $h(\lambda,\partial)$. Thus, $deg_\partial g(\lambda,\partial)=0$, i.e. $g(\lambda,\partial)=A(\lambda)$
for some $A(\lambda)\in \mathbb{C}[\lambda]$. Therefore, $h(\lambda,\partial)=B(\lambda)$ for some $B(\lambda)\in \mathbb{C}[\lambda]$.
Then, considering $[Y_\lambda Y]_{\lambda+\mu}v=((\partial+2\lambda)M)_{\lambda+\mu}v$, we get
\begin{eqnarray}
\label{h1}g(\mu,\partial+\lambda)g(\lambda,\partial)-g(\lambda,\partial+\mu)g(\mu,\partial)=(\lambda-\mu)h(\lambda+\mu,\partial).
\end{eqnarray}
Taking $g(\lambda,\partial)=A(\lambda)$ and $h(\lambda,\partial)=B(\lambda)$ into (\ref{h1}), we have $(\lambda-\mu)B(\lambda)=0$.
Therefore, $h(\lambda,\partial)=B(\lambda)=0$, i.e. $M_\lambda v=0$. Thus, $[L_\lambda M]_{\lambda+\mu}v=((\partial+2(a-1)\lambda+2b)M)_{\lambda+\mu}v=0$, $[Y_\lambda M]_{\lambda+\mu}v=0$ and $[M_\lambda M]_{\lambda+\mu}v=0$ hold.
Finally, by $[L_\lambda Y]_{\lambda+\mu}v=((\partial+a\lambda+b)Y)_{\lambda+\mu}v$, we can get
\begin{gather}
\label{h2}(-\lambda-\mu+a\lambda+b)g(\lambda+\mu,\partial)=g(\mu,\lambda+\partial)f(\lambda,\partial)\\
-g(\mu,\partial)f(\lambda,\mu+\partial).\nonumber
\end{gather}
Obviously, if $f(\lambda,\partial)=0$, $g(\lambda,\partial)=0$, which means this module action is trivial. Therefore,
$f(\lambda,\partial)=\partial+\alpha\lambda+\beta$. Taking this and $g(\lambda,\partial)=A(\lambda)$ into (\ref{h2}), we obtain
\begin{eqnarray}
\label{h3}(-\mu+(a-1)\lambda+b)A(\lambda+\mu)=-A(\mu)\mu.
\end{eqnarray}
Obviously, $A(\mu)=\gamma$ for some $\gamma\in \mathbb{C}$. Then, plugging it into (\ref{h3}), we can get:
If $a\neq 1$ or $b\neq 0$, $g(\lambda,\partial)=\gamma=0$; if $a=1$ and $b=0$, $g(\lambda,\partial)=\gamma$ for any
$\gamma\in \mathbb{C}$.

Now, the proof is finished.

(2) Assume that the module action of $TSV(c)$ on $\mathbb{C}[\partial]v$ is defined by (\ref{h5}). According to that in (1), with the same discussions
on $[L_\lambda L]_{\lambda+\mu}v=((\partial+2\lambda)L)_{\lambda+\mu}v$ and $[Y_\lambda M]_{\lambda+\mu}v=0$,
we get that $f(\lambda,\partial)=0$ or
$f(\lambda,\partial)=\partial+\alpha\lambda+\beta$, $g(\lambda,\partial)=A(\lambda)$ and $h(\lambda,\partial)=B(\lambda)$ for some $\alpha$, $\beta\in \mathbb{C}$ and $A(\lambda)$, $B(\lambda)\in \mathbb{C}[\lambda]$.

Applying $[Y_\lambda Y]=-(\partial+2\lambda)(\partial+2c)M$ to $v$, we can obtain
$Y_\lambda(Y_\mu v)-Y_\mu(Y_\lambda v)=(-(\partial+2\lambda)(\partial+2c)M)_{\lambda+\mu}v$. From this,
we can deduce that $
g(\lambda,\partial)g(\mu,\lambda+\partial)-g(\mu,\partial)g(\lambda,\mu+\partial)=-(\lambda-\mu)(-\lambda-\mu+2c)h(\lambda+\mu,\partial)$.
Taking $g(\lambda,\partial)=A(\lambda)$ and $h(\lambda,\partial)=B(\lambda)$ into the above equality, we get
$-(\lambda-\mu)(-\lambda-\mu+2c)B(\lambda+\mu)=0$. Thus, $h(\lambda,\partial)=B(\lambda)=0$. Then,
by $[L_\lambda Y]_{\lambda+\mu}v=((\partial+\frac{3}{2}\lambda+c)Y)_{\lambda+\mu}v$, with a similar discussion as that in (1),
we have $f(\lambda,\partial)=\partial+\alpha\lambda+\beta$ and $g(\lambda,\partial)=A(\lambda)=0$. Then, $[L_\lambda M]_{\lambda+\mu}v=((\partial+2c)M)_{\lambda+\mu}v=0$,$[Y_\lambda M]_{\lambda+\mu}v=0$ and $[M_\lambda M]_{\lambda+\mu}v=0$ hold,
which concludes the proof.
\end{proof}

\end{document}